\tikzstyle{vertex}=[circle, draw, inner sep=0pt, minimum size=8pt]
\newcommand{\vertex}{\node[vertex]}
\newtheorem{lemma}{Lemma}[section]
\newtheorem{theorem}[lemma]{Theorem}
\newtheorem{corollary}[lemma]{Corollary}
\newtheorem{conjecture}[lemma]{Conjecture}
\tikzstyle{vertex}=[circle, draw, inner sep=0pt, minimum size=10pt]
\newcounter{claim}
\newenvironment{proof}[1][]%
 {\noindent {\setcounter{claim}{0}\sc proof ---
   }{#1}{}}{\hfill$\Box$\vspace{2ex}} 
\newenvironment{claim}[1][]%
{\refstepcounter{claim}\vspace{1ex}\noindent{(\it\arabic{claim}){#1}{}}\it}{\vspace{1ex}}
\newenvironment{proofclaim}[1][]%
	{\noindent {}{#1}{}}{ This proves~(\arabic{claim}).\vspace{2ex}}
\title{Vizing's and Shannon's Theorems for defective edge colouring}
\author{Pierre Aboulker$^1$, Guillaume Aubian$^{1,2}$, Chien-Chung Huang$^{1}$. \\
\small ($1$) DIENS, \'Ecole normale sup\'erieure, CNRS, PSL University, Paris, France.  \\
\small ($2$) Université de Paris, CNRS, IRIF, F-75006, Paris, France.\\
 \small{Mails: \tt pierreaboulker@gmail.com, guillaume.aubian@gmail.com,  Chien-Chung.Huang@ens.fr}}
\begin{document}

\maketitle
\begin{abstract}
We call a multigraph $(k,d)$-edge colourable if its edge set can be partitioned into  $k$ subgraphs of maximum degree at most $d$ and denote as $\chi'_{d}(G)$ the minimum $k$ such that $G$ is $(k,d)$-edge colourable.  
We prove that for every odd integer $d$, every multigraph $G$ with maximum degree $\Delta$ is  $(\lceil \frac{3\Delta - 1}{3d - 1} \rceil, d)$-edge colourable and that this bound is attained for all values of $\Delta$ and $d$. 
An easy consequence of Vizing Theorem is that $\chi'_{d}(G) \in \{ \lceil \frac{\Delta}{d} \rceil, \lceil \frac{\Delta+1}{d} \rceil \}$. We characterize the values of $d$ and $\Delta$ for which it is NP-complete to compute $\chi'_d(G)$. These results generalize  classic results on the chromatic index of a graph by Shannon, Holyer, Leven and Galil and extend a result of Amini, Esperet and ven den Heuvel.
\end{abstract}

\section{Introduction}
\emph{Graphs} in this paper are finite, undirected, and without loops, but may have multiple edges.  A graph is \textit{simple} if it has no multiple edge.
Let $G$ be a graph. 
We denote by $\Delta(G)$ the maximum degree of $G$. 
An \emph{edge colouring of $G$ with defect $d$} is a colouring of its edge set in such a way that each vertex is incident with at most $d$ edges of the same colour. We say that $G$ is \emph{$k$-edge colourable with defect $d$}, or simply \emph{$(k,d)$-edge colourable} if $G$ admits an edge colouring with defect $d$ using (at most) $k$ colours. In other words, the edge set can be partitioned into at most $k$ subgraphs of maximum degree $d$. 
 The \emph{$d$-defective chromatic index} of $G$ is the minimum $k$ such that $G$ is $(k,d)$-edge colourable and is denoted by $\chi^{'}_{d}(G)$. 
So $\chi'_1(G)$ is the usual chromatic index. 

This notion is called \textit{frugal edge colouring} in~\cite{amini:inria-00144318} and \textit{improper edge colouring} in~\cite{HILTON2001253}. Our vocabulary follows the one existing for the  analogue concept of defective vertex colouring, a now well established notion. See~\cite{W18} for a nice dynamic survey on defective vertex colouring. 



Our first result is the following.

\begin{theorem}\label{thm:main_theorem}
Let $d, \Delta \geq 1$ and let $G$ a graph with maximum degree $\Delta$. 
If $d$ is even, then $\chi'_{d}(G) = \lceil \frac{\Delta}{d} \rceil$, and if $d$ is odd, then $\chi'_{d}(G) \leq \lceil \frac{3\Delta - 1}{3d - 1} \rceil$. Moreover, these bounds are tight for every value  of $\Delta$ and $d$, see Lemma~\ref{lem:shannon_tight}.
\end{theorem}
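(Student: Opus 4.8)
The plan is to handle the even and odd cases separately, since the even case gives equality and the odd case gives an upper bound, with very different flavours.

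For the even case, where we want $\chi'_d(G) = \lceil \frac{\Delta}{d} \rceil$, the lower bound is immediate: any vertex of degree $\Delta$ is incident to $\Delta$ edges, and each colour class absorbs at most $d$ of them, so at least $\lceil \frac{\Delta}{d} \rceil$ colours are forced. For the upper bound I would reduce to the ordinary chromatic index via a ``blow-down'' argument. The key observation is that a $(k,d)$-edge colouring of $G$ is the same as an ordinary edge colouring where we are allowed to reuse each colour $d$ times at each vertex; equivalently, I want to group edges into colour classes each inducing a subgraph of maximum degree $d$. When $d$ is even, I would invoke the classical fact (a consequence of Shannon/Vizing-type arguments, or directly of Petersen's theorem on $2$-factorisations) that the edges of any multigraph of maximum degree $\Delta$ can be partitioned into $\lceil \Delta/d \rceil$ subgraphs each of maximum degree $d$: one repeatedly peels off $d$-regular-ish subgraphs by using the $2$-factor decomposition of an Eulerian supergraph obtained after adding a perfect matching of the odd-degree vertices and taking pairs of the resulting $2$-factors. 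Because $d$ is even, $d/2$ disjoint $2$-factors give exactly a degree-$d$ subgraph, which is why evenness is precisely what makes this clean.

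For the odd case the plan is to mimic Shannon's proof that $\chi'(G) \le \lfloor 3\Delta/2 \rfloor$. Shannon's bound is recovered here at $d=1$, where $\lceil \frac{3\Delta-1}{3\cdot 1 - 1}\rceil = \lceil \frac{3\Delta-1}{2}\rceil$, matching $\lfloor 3\Delta/2\rfloor$, so the right framework is a defective analogue of the fan/recolouring argument. I would set $k = \lceil \frac{3\Delta-1}{3d-1}\rceil$ and argue by contradiction on a minimal counterexample: take an edge $uv$ that cannot be coloured in a partial $(k,d)$-colouring of the rest, and look at the colours appearing ``too many times'' (i.e.\ already $d$ times) at $u$ and at $v$. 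A colour is \emph{unavailable} at $u$ if it is used $d$ times at $u$. If $uv$ cannot be coloured, every one of the $k$ colours is unavailable at $u$ or at $v$; a counting argument bounds the total edge-multiplicity at $u$ and $v$ from below by roughly $k(3d-1)$ in terms of the $d$-times-used colours, which must be reconciled with the degree bound $\deg(u),\deg(v)\le \Delta$. The crux is that for odd $d$ a single colour class behaves like an odd-degree subgraph, so a Shannon-style triangle/multiplicity estimate costs $3d-1$ per colour rather than $2d$, which is exactly the content of the improved denominator.

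The main obstacle, and the step I would spend the most care on, is the recolouring mechanism in the odd case: Shannon's classic proof alternates along Kempe chains of two colours, but here each ``colour'' is a degree-$d$ subgraph rather than a matching, so an alternating path is not well defined in the naive sense. I would instead work with the multigraph restricted to two colour classes, whose maximum degree is at most $2d$, and use an augmenting/rerouting argument within this $2d$-bounded subgraph to free up capacity at $u$ or $v$ for the edge $uv$; controlling how such a reroute affects degrees at the two endpoints simultaneously is the delicate part and is where the parity of $d$ must be exploited. I expect the tightness claim to be deferred entirely to Lemma~\ref{lem:shannon_tight}, so I would only need to prove the two inequalities here, citing that lemma for optimality.
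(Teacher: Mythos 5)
Your even case is fine and matches the paper (trivial lower bound plus Petersen-style $2$-factorisation, with tightness deferred to Lemma~\ref{lem:shannon_tight}). The problem is the odd case, which is the entire content of the theorem, and there your proposal has a genuine gap --- one you flag yourself: the recolouring mechanism. For $d=1$, the union of two colour classes is a disjoint union of paths and even cycles, and swapping colours along a component changes degrees only at path-ends; this is exactly what makes Shannon's and Vizing's exchange arguments work. For odd $d\ge 3$, the union of two classes is a graph of maximum degree $\le 2d$, and while one can always re-split such a graph into two parts of maximum degree $\le d$ (alternate along an Eulerian circuit, as in Lemmas~\ref{lem:delta_even_kV_even} and~\ref{lem:delta_even_kV_odd}), one has essentially no control over which part the edges incident to the two specific vertices $u$ and $v$ land in --- and the parity obstruction at odd-degree vertices of that union is precisely what breaks the naive extension. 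Your counting sketch is also unsubstantiated: the straightforward count (every colour full at $u$ or at $v$) gives only $dk \le \deg(u)+\deg(v)-2$, i.e.\ $k \lesssim 2\Delta/d$, and you never explain how a third vertex enters to produce the claimed ``cost of $3d-1$ per colour'' that the improved denominator requires. This is not routine bookkeeping: the paper itself points out that a Shannon-type bound for defective edge colouring announced in~\cite{amini:inria-00144318} ``seems to contain a flaw'' and holds only when $3d-1$ divides $\Delta$, so the exchange lemma you are postulating is exactly the step that has resisted this style of attack.

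For comparison, the paper's proof avoids recolouring altogether and works with factors. It first reduces to $\Delta$-regular graphs and to ``special'' values of $\Delta$, then takes a minimal counterexample and proves three structural claims: any bridge must cut off a copy of $Sh(\Delta)$, there is at most one bridge, and (using Kano's theorem, Theorem~\ref{lem:factor2/3}, together with a surgery around the unique bridge) $G$ has a $k$-factor for every even $k\le 2\Delta/3$. Small special values $\Delta \in \{d,\,2d-1,\,3d-1,\,4d-1,\,5d-2\}$ are handled as base cases via these factors and the Eulerian splitting lemmas; for $\Delta \ge 5d-1$ one extracts a $(3d-1)$-factor, colours it with $3$ colours by the base case, and inducts on the $(\Delta-(3d-1))$-regular remainder, which makes the arithmetic $3 + \lceil\frac{3(\Delta-(3d-1))-1}{3d-1}\rceil = \lceil\frac{3\Delta-1}{3d-1}\rceil$ close. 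So the decomposition machinery you would need is available, but only as a global factor-extraction argument; the local Kempe-type exchange your plan hinges on is not supplied by anything in the paper, and filling that hole would amount to a genuinely new (and possibly unattainable) proof.
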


The case $d=1$ corresponds to the classic result of Shannon~\cite{S49} on chromatic index  stating that for every graph $G$, $\chi'_1(G) \leq \lfloor \frac{3\Delta(G)}{2} \rfloor$ (observe that $\lceil \frac{3\Delta - 1}{2} \rceil = \lfloor \frac{3\Delta}{2} \rfloor$ whenever $\Delta \geq 1$). 
When $d$ is even, the result is almost trivial in our context (see Theorem~\ref{thm:deven}, and was already known in the more general context of list colouring~\cite{HILTON2001253, amini:inria-00144318}. 
When $d$ is odd, a proof that $\chi'_d(G) \leq \lceil \frac{3\Delta}{3d - 1} \rceil$ in the context of list coloring is announced in~\cite{amini:inria-00144318}, but seems to contain a flaw and actually holds only in the case where $\Delta$ is divisible by $3k-1$. See section~\ref{sec:further} for more on the list colouring context. 


\medskip

Vizing's celebrated theorem on edge colouring~\cite{V64} states that for every simple graph $G$, $\chi'_1(G) \in \{\Delta(G), \Delta(G) +1\}$, and Holyer~\cite{H81}, and Leven and Galil~\cite{LZ83} proved that deciding if $\chi'_1(G) =\Delta(G)$ is NP-complete even restricted to $d$-regular simple graphs as soon as $d \geq 3$. 
We generalize both results by proving that for every simple graph $G$, $\chi'_{d}(G) \in \{ \lceil \frac{\Delta}{d} \rceil, \lceil \frac{\Delta+1}{d} \rceil \}$ (which is easily implied by Vizing's Theorem) and we characterize the values of $\Delta$ and $d$ for which the problem is NP-complete. More precisely, we prove that, given a $\Delta$-regular simple graph, it is NP-complete to decide if $\chi'_d(G) = \lceil \frac{\Delta}{d} \rceil$ if and only if $d$ is odd and $\Delta = kd$ for some $k \geq 3$. See Theorem~\ref{thm:NP}. 
\medskip

We give some definitions and preliminary results in Section~\ref{sec:intro}. We prove the generalization of Shannon's Theorem in Section~\ref{sec:multi} and the proof of the generalization of Vizing's Theorem in Section~\ref{sec:simple}. Finally, in Section~\ref{sec:further}, we propose as  conjectures a generalisation of Theorem~\ref{thm:main_theorem} for list coloring and a generalisations of the Goldberg-Seymour Conjecture.
\medskip


\section{Definitions and preliminaries} \label{sec:intro}
Let $G$ be a graph. 
The \emph{size} of $G$ is its number of vertices. It is \textit{regular} if there is an integer $k$ such that every vertex of $G$ has degree $k$. In this case we can also say it is \textit{$k$-regular}. 
We say that $G$ is \emph{$k$-edge-connected} if it remains connected whenever (strictly) fewer than $k$ edges are removed. 
If $u, v \in V(G)$, we denote as $G + uv$ the graph $(V(G), E(G) \cup \{ uv \})$ (recall that in this paper graphs can have multiple edges, so if there is already an edge between $u$ and $v$, another one is added). Similarly, $G - uv = (V(G), E(G) \setminus \{ uv \})$.

The following gives a trivial lower bound on the $d$-defective chromatic index that turns out to be tight whenever $d$ is even (see Theorem~\ref{thm:deven}). 

\begin{lemma}\label{lem:trivialbound}
For every graph $G$, $\chi'_{d}(G) \geq \lceil \frac{\Delta(G)}{d} \rceil$.
\end{lemma}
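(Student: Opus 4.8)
The plan is to argue by a direct double-counting at a single vertex of maximum degree. First I would fix a vertex $v$ of $G$ with $\deg(v) = \Delta(G)$, and let $k = \chi'_d(G)$, so that by definition $G$ admits a $(k,d)$-edge colouring; fix one such colouring, with colour classes $C_1, \dots, C_k$ (each a subgraph of maximum degree at most $d$).

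Next I would count the edges incident with $v$ colour by colour. Since each $C_i$ has maximum degree at most $d$, the vertex $v$ is incident with at most $d$ edges of colour $i$, and this holds even in the presence of multiple edges because the defect condition caps the number of edges of each colour at $v$ regardless of their endpoints. Summing over the $k$ colour classes, the total number of edges incident with $v$ is at most $kd$. On the other hand this total is exactly $\deg(v) = \Delta(G)$, since every edge incident with $v$ receives exactly one colour. Hence $\Delta(G) \leq kd$, which gives $k \geq \Delta(G)/d$, and since $k$ is an integer, $k \geq \lceil \Delta(G)/d \rceil$. Substituting $k = \chi'_d(G)$ yields the claim.

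There is essentially no hard step here: the statement is a pigeonhole-type counting bound, and the only point that requires a moment's care is confirming that the defect condition genuinely limits the incident edges of each colour at $v$ to $d$ even when $G$ is a multigraph, which is immediate from the definition of maximum degree in a multigraph.
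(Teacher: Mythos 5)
Your proof is correct and is precisely the paper's argument, spelled out in more detail: the paper's one-line proof also counts the edges at a vertex of degree $\Delta(G)$ and observes that each colour can account for at most $d$ of them. Nothing to add.
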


\begin{proof}
A least $\lceil \frac{\Delta(G)}{d} \rceil$ colours are needed to colour the edges incident to a vertex of degree $\Delta(G)$.
\end{proof}


\begin{lemma}\label{lem:chi_increasing}
Let $k, d, \Delta$ be integers. 
If every $(\Delta + 1)$-regular graph is $(k,d)$-edge colourable, then every $\Delta$-regular graph is also  $(k,d)$-edge colourable. 
\end{lemma}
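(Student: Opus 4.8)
The plan is to realise every $\Delta$-regular graph $G$ as a subgraph of some $(\Delta+1)$-regular multigraph $H$, and then pull back a colouring. The single observation that makes the whole argument work is that deleting edges can only decrease, at every vertex and for every colour, the number of incident edges of that colour. Consequently, if $H$ is $(k,d)$-edge colourable and $G$ is a subgraph of $H$, then restricting any defect-$d$ colouring of $H$ to $E(G)$ yields a defect-$d$ colouring of $G$ using at most $k$ colours. So the entire content of the lemma reduces to constructing a suitable $(\Delta+1)$-regular $H$ containing $G$.

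To build $H$ uniformly I would take two disjoint copies $G_1$ and $G_2$ of $G$ and, for each vertex $v$ of $G$, add one edge joining the copy of $v$ in $G_1$ to the copy of $v$ in $G_2$. Each vertex of the resulting graph $H$ has degree $\Delta$ inside its own copy plus exactly one new edge, so $H$ is $(\Delta+1)$-regular, and $G_1 \cong G$ sits inside it as a subgraph. Applying the hypothesis to $H$ gives a $(k,d)$-edge colouring; restricting it to $E(G_1)$ and transporting it through the isomorphism $G_1 \cong G$ produces, by the observation above, a $(k,d)$-edge colouring of $G$, which is exactly what we want.

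There is no genuinely hard step here; the only point requiring care is a parity issue, which is precisely why I pass to two copies rather than simply adding a perfect matching to $G$ itself. Indeed, when $G$ has an odd number $n$ of vertices, $n\Delta$ must be even, forcing $\Delta$ even and $\Delta+1$ odd, so no $(\Delta+1)$-regular graph exists on the same vertex set and one cannot augment $G$ in place. Doubling the graph makes the total vertex count $2n$ always even, so the matching between corresponding twins always exists, and the construction goes through for all values of $\Delta$ regardless of parity.
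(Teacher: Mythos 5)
Your proof is correct and uses exactly the same construction as the paper: doubling $G$ and joining each vertex to its twin to obtain a $(\Delta+1)$-regular supergraph, then restricting the colouring. The parity remark explaining why one cannot simply add a perfect matching in place is a nice (if optional) addition.
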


\begin{proof}
Let $G$ be a $\Delta$-regular graph. 
Take two disjoint copies $G'$ and $G''$ of $G$ and add an edge between each vertex $v \in V(G')$ and its copy in $G''$. The obtained graph $H$ is $(\Delta+1)$-regular and contains $G$ as a subgraph, so $\chi'_d(G) \leq \chi'_d(H) \leq k$. 
\end{proof}


\subsubsection*{Factors in graphs}

A \emph{$k$-factor} of $G$ is a $k$-regular spanning subgraph of $G$. 
We sometimes consider a $k$-factor $F$ as its edge set $E(F)$. 
We recall this theorem from Petersen \cite{P91}, one of the very first fundamental results in graph theory:

\begin{theorem}\cite{P91}\label{thm:petersen}
Let $\Delta$ be an even integer. A $\Delta$-regular graph admits a $k$-factor for every even integer $k \leq \Delta$. 
\end{theorem}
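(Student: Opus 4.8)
The plan is to prove the slightly stronger statement that a $\Delta$-regular graph $G$ (with $\Delta$ even) decomposes into $\Delta/2$ edge-disjoint $2$-factors; the existence of a $k$-factor for every even $k \le \Delta$ then follows at once by taking the union of any $k/2$ of these $2$-factors, since every vertex thereby acquires degree $2 \cdot (k/2) = k$, giving a $k$-regular spanning subgraph.

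First, because $\Delta$ is even, every vertex of $G$ has even degree, so each connected component of $G$ is Eulerian. I would fix an Eulerian circuit in each component and orient every edge in the direction it is traversed along the circuit. In the resulting orientation every vertex has in-degree equal to its out-degree, namely $\Delta/2$, since the circuit enters and leaves each vertex the same number of times.

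Next, I would build an auxiliary bipartite multigraph $H$ on the vertex set $\{v^+ : v \in V(G)\} \cup \{v^- : v \in V(G)\}$, placing one edge $u^+ v^-$ for each arc $u \to v$ of the orientation. By the degree balance just established, $H$ is $(\Delta/2)$-regular and bipartite. Every regular bipartite multigraph has a perfect matching (Hall's condition holds because any subset $S$ on one side is incident to at least $|S|$ edges, hence to at least $|S|$ neighbours), and deleting a perfect matching leaves a $(\Delta/2 - 1)$-regular bipartite graph, so by induction $H$ decomposes into $\Delta/2$ perfect matchings.

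Finally I would translate each perfect matching $M$ of $H$ back to $G$: $M$ selects, for each vertex $v$, exactly one arc leaving $v$ (through $v^+$) and exactly one arc entering $v$ (through $v^-$), so the corresponding edge set of $G$ has every vertex with in-degree and out-degree $1$, i.e.\ underlying degree $2$. Thus each $M$ yields a $2$-factor of $G$, and the $\Delta/2$ matchings together give the desired decomposition. The only delicate point is the matching step, which rests on Hall's theorem for bipartite multigraphs; the rest is bookkeeping that goes through verbatim even when $G$ has multiple edges, since parallel edges simply become parallel edges of $H$.
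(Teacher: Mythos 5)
The paper never proves this statement: it is quoted directly from Petersen \cite{P91} as a classical result, so there is no internal proof to compare against. Your proposal supplies a proof, and it is the standard one: an Eulerian orientation of each component gives in-degree $=$ out-degree $=\Delta/2$ at every vertex; splitting each vertex into $v^+,v^-$ produces a $(\Delta/2)$-regular bipartite multigraph; decomposing that into perfect matchings and pulling each matching back yields a $2$-factorization of $G$ (since the paper's graphs have no loops, the out-arc and in-arc selected at a vertex are genuinely distinct edges, so each matching gives every vertex underlying degree exactly $2$); unions of $2$-factors then realize every even $k\leq\Delta$. This is, in fact, exactly the circle of ideas the paper gestures at right after the statement (``This idea was already used by Petersen to prove his theorem''), and it is the same Eulerian-circuit trick that the paper deploys in a more elementary alternating form in Lemma~\ref{lem:delta_even_kV_even} and Lemma~\ref{lem:delta_even_kV_odd}.

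One step of your write-up is misjustified, although the claim it supports is true. You argue that Hall's condition holds in $H$ because any subset $S$ on one side ``is incident to at least $|S|$ edges, hence to at least $|S|$ neighbours.'' In a multigraph that inference is invalid: many edges can land on few neighbours. The correct counting uses regularity on both sides: $S$ is incident to exactly $(\Delta/2)|S|$ edges, all of which end in $N(S)$, and each vertex of $N(S)$ can absorb at most $\Delta/2$ of them, so $|N(S)|\geq|S|$. With that one-line repair (or by invoking K\"onig's edge-colouring theorem for bipartite multigraphs directly), your proof is complete and correct.
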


An Eulerian cycle of a graph $G$ is a cycle that uses every edge of $G$. It is a well-known fact that a graph admits an Eulerian cycle if and only if it is connected and all its vertices have even degree. 
The next two lemmas use this fact to prove the existence of factors. This idea was already used by Petersen to prove his theorem. 

\begin{lemma}\label{lem:delta_even_kV_even}
Let $G$ be a connected $2k$-regular graph with an even number of edges. Then the edges of $G$ can be partitioned into two $k$-factors. 
\end{lemma}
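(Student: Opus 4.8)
The plan is to use the Eulerian cycle that the preceding discussion guarantees. Since $G$ is connected and $2k$-regular, every vertex has even degree, so $G$ admits an Eulerian cycle, i.e. a closed walk $W = v_0 e_1 v_1 e_2 \cdots e_m v_m$ with $v_m = v_0$ that traverses each of the $m = |E(G)|$ edges exactly once. First I would fix such a walk and then colour its edges \emph{alternately} with two colours, assigning $e_i$ colour $1$ if $i$ is odd and colour $2$ if $i$ is even.

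The first key point is that this alternating colouring is \emph{consistent} around the closed walk. Consecutive edges $e_i, e_{i+1}$ always receive different colours; the only place this could fail is at the wrap-around, where $e_m$ and $e_1$ meet at $v_0 = v_m$. Since by hypothesis $m$ is even, $e_m$ receives colour $2$ and $e_1$ colour $1$, so the two colours genuinely alternate all the way around the cycle. This is exactly where the parity assumption on $|E(G)|$ enters, and it is the step I would be most careful about.

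The second key point is a local counting argument at each vertex $v$. Because $v$ has degree $2k$, it occurs exactly $k$ times among $v_0, \dots, v_{m-1}$, and each occurrence $v = v_j$ pairs the two incident walk-edges $e_j$ and $e_{j+1}$ (indices taken cyclically, using the wrap-around pair $(e_m, e_1)$ at $v_0 = v_m$). These $k$ pairs partition the $2k$ edge-incidences at $v$, and by the alternation each pair contains exactly one edge of each colour. Hence $v$ is incident with exactly $k$ edges of colour $1$ and $k$ edges of colour $2$. I would note that $G$ has no loops, so incidences and incident edges coincide and no edge is double counted at $v$.

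The two colour classes are therefore $k$-regular spanning subgraphs of $G$, i.e. two $k$-factors, and they partition $E(G)$ by construction, which completes the proof. The main obstacle is not any single hard step but getting the bookkeeping right: ensuring the alternation closes up consistently (which is precisely what forces the even-edge hypothesis) and correctly pairing the edge-incidences at each vertex so that the count of $k$ per colour falls out cleanly.
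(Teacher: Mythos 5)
Your proof is correct and follows essentially the same route as the paper: number the edges along an Eulerian cycle, two-colour them alternately, use the evenness of $|E(G)|$ so the alternation closes up, and observe that each of the $k$ occurrences of a vertex on the closed walk contributes one edge of each colour. Your write-up simply makes explicit the wrap-around and incidence-counting details that the paper's shorter proof leaves implicit.
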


\begin{proof}
We number the edges $e_1, e_2, \dots, e_{2t}$ of $G$ along an Eulerian cycle $C$ and we let $A= \{e_1, e_3, \dots, e_{2t-1}\}$ and $B = \{e_2, e_4, \dots, e_{2t}\}$. 
Since consecutive edges of $C$ are numbered with different parities and its first and last edges  have distinct parities, $A$ and $B$ are both $k$-regular. 
\end{proof}

\begin{lemma}\label{lem:delta_even_kV_odd}
Let $G$ be a connected $2k$-regular graph with an odd number of edges, and let $e \in E(G)$. There exist two graphs $G_{A} = (V,A)$ and $G_{B} = (V,B)$ such that $E(G) = A \cup B \cup \{e\}$, $\Delta(G_{A}) \leq k$ and $\Delta(G_{B}) \leq k$.
\end{lemma}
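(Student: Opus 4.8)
The plan is to mimic the Eulerian-cycle argument of Lemma~\ref{lem:delta_even_kV_even}, the only new difficulty being that the odd number of edges forces a parity clash where the cycle closes; we absorb this clash into the distinguished edge $e$. Since $G$ is connected and $2k$-regular, every vertex has even degree, so $G$ admits an Eulerian cycle $C$. As $C$ traverses every edge and is closed, I would number the edges $e_1, e_2, \dots, e_{2t+1}$ in the order they appear along $C$ but starting from $e$; that is, $e_1 = e$ and $|E(G)| = 2t+1$ is odd.

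Next I would set $A = \{e_i : i \text{ even}\}$ and $B = \{e_i : i \text{ odd and } i \geq 3\}$, so that $E(G) = A \cup B \cup \{e\}$, as required. Writing $C$ as a sequence of vertices $v_0, v_1, \dots, v_{2t}, v_{2t+1} = v_0$ with $e_i = v_{i-1}v_i$, each passage of $C$ through a vertex uses two consecutively-numbered edges, exactly one of which is even and one odd. As in the even case, a passage using edges $e_i, e_{i+1}$ with $i \geq 2$ therefore contributes exactly one edge to $A$ and one to $B$. The pattern is broken only at the two passages involving $e_1$: the one at $v_1$ using $e_1$ (removed) and $e_2 \in A$, and the opening/closing one at $v_0$ using $e_{2t+1} \in B$ and $e_1$ (removed).

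Then I would carry out the degree count. Let $x = v_0$ and $y = v_1$ be the endpoints of $e = e_1$. Every vertex distinct from $x$ and $y$ has all $k$ of its passages of the balanced type, hence receives exactly $k$ edges in $A$ and $k$ in $B$. At $y$ one passage uses $e_1$ (removed) and $e_2 \in A$, so $\deg_A(y) = k$ and $\deg_B(y) = k-1$; symmetrically at $x$ one passage uses $e_{2t+1} \in B$ and $e_1$ (removed), so $\deg_B(x) = k$ and $\deg_A(x) = k-1$. In every case both degrees are at most $k$, which gives $\Delta(G_A) \leq k$ and $\Delta(G_B) \leq k$.

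The step I expect to require the most care is the bookkeeping at $x$ and $y$: one must check that $x \neq y$ (there are no loops), that no other passage of $C$ is special at these two vertices, and that the removed edge $e_1$ is counted once at each endpoint, so that the identity $2k = \deg_A(v) + \deg_B(v) + [\,v \text{ incident to } e\,]$ holds at every vertex. Once this accounting is verified, the claimed bounds follow immediately.
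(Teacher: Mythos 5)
Your proposal is correct and is essentially the paper's own proof: the paper also takes an Eulerian cycle, alternates edges between the two classes, and absorbs the parity clash by leaving the distinguished edge $e$ unassigned at the seam of the tour (the paper makes $e$ the last edge of the tour, you make it the first, which is the same up to cyclic relabeling). Your explicit degree bookkeeping at the two endpoints of $e$ is a more detailed write-up of what the paper leaves implicit.
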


\begin{proof}
The proof is the same as for the previous Lemma, except that we do not assign the last edge of the Eulerian cycle, and we choose $e$ to be this last edge. 
\end{proof}




The next theorem roughly says that, in a $\Delta$-regular graph, one can find a $k$-factor  as soon as $k$ is even and is relatively small compared to $\Delta$. It was first proved in \cite{K84}. See also Theorem 3.10 $(v)$ in \cite{AK11}). The version stated here is a simplified version of the original theorem.

\begin{theorem}\cite{K84}\label{lem:factor2/3}\label{lem:delta_odd_k_even_factor_including}
Let $\Delta$ be an odd integer and $G$ a $2$-edge connected $\Delta$-regular graph.  Let $e \in E(G)$. Let $k$ be an even integer with $k \leq \frac{2\Delta}{3}$. Then $G$ has a $k$-factor containing $e$. 
\end{theorem}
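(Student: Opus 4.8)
The plan is to derive the theorem from Tutte's classical $f$-factor theorem, proving the existence of a plain $k$-factor first and only afterwards routing the prescribed edge into it. Write $e=xy$. The existence of a $k$-factor is the $f$-factor problem with $f\equiv k$, and since $k$ is even the parity requirement $\sum_v f(v)=k|V|$ is automatically met, so Tutte's theorem applies with no obstruction.

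By Tutte's $f$-factor theorem it suffices to verify, for every pair of disjoint $S,T\subseteq V$,
\[ \sum_{v\in S} f(v) + \sum_{v\in T}\bigl(d_{G-S}(v)-f(v)\bigr) - q(S,T) \ge 0, \]
where $q(S,T)$ counts the components $C$ of $G-(S\cup T)$ with $e(C,T)+f(C)$ odd. With $f\equiv k$ even, $f(C)=k|C|$ is even, so the relevant (\emph{odd}) components are exactly those with $e(C,T)$ odd, and since $d_G(v)-f(v)=\Delta-k$ for every vertex the left-hand side equals $k|S|+(\Delta-k)|T|-e(S,T)-q(S,T)$. Writing $s_i=e(C_i,S)$ and $t_i=e(C_i,T)$ for the components $C_i$, counting the $\Delta$ edges at each vertex of $S$ and of $T$ gives $|S|\ge(\sum_i s_i+e(S,T))/\Delta$ and $|T|\ge(\sum_i t_i+e(S,T))/\Delta$; substituting these, the $e(S,T)$ terms cancel, and the whole inequality reduces to the per-component estimate $k s_i+(\Delta-k)t_i\ge\Delta$ for each odd component.

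This discharging step is, I expect, the conceptual heart of the proof, and it is exactly here that both hypotheses are consumed. For an odd component $t_i$ is odd. If $t_i=1$, then $2$-edge-connectivity forces $s_i+t_i\ge 2$, hence $s_i\ge1$, and the left side is at least $k+(\Delta-k)=\Delta$. If $t_i\ge 3$, then the left side is at least $3(\Delta-k)$, which is $\ge\Delta$ precisely when $k\le\frac{2\Delta}{3}$. Thus $2$-edge-connectivity handles the thin cuts and the bound $\frac23$ handles the thick ones, and the threshold is sharp at this point. This establishes a $k$-factor $F$.

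The remaining and most delicate point is to make $F$ contain $e$. Rather than perturb the $f$-factor set-up at the endpoints of $e$ (which is fragile, since deleting $xy$ can drop the outgoing degree of the component containing one endpoint below $2$ and thereby break the $s_i+t_i\ge2$ bound used above), I would argue by exchange. If $e\notin F$, then since $1\le k<\Delta$ every vertex has positive degree in both $F$ and $G-F$; starting from $e$ one builds a closed walk alternating between non-$F$ edges and $F$ edges, extracts from it an alternating cycle $Z$ through $e$, and replaces $F$ by $F\triangle Z$, which is again a $k$-factor and now contains $e$. The main obstacle I anticipate is the careful extraction of a genuine alternating cycle (as opposed to a self-intersecting alternating walk) through $e$, for which one should track the residual $F$- and $(G-F)$-degrees at revisited vertices and use the connectivity of $G$.
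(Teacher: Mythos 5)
Your step 1 is fine, but it is worth noting that the paper itself does not prove this theorem at all: it is quoted from Kano~\cite{K84} (see also Theorem 3.10(v) of~\cite{AK11}), so your attempt can only be judged against the literature. The existence part of your argument is correct and complete: the reduction of Tutte's $f$-factor inequality to the per-component estimate $ks_i+(\Delta-k)t_i\ge\Delta$ is sound (modulo the trivial case $S=T=\emptyset$, where $q(S,T)=0$), and the two cases $t_i=1$ (via $2$-edge-connectivity) and $t_i\ge 3$ (via $k\le\frac{2\Delta}{3}$) close it.

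The genuine gap is your step 2, and it is not the bookkeeping issue you anticipate; the approach itself cannot work as described. First, building an alternating closed walk through $e$ with respect to a given $k$-factor $F$ is \emph{equivalent} to the theorem's conclusion: if $F'$ is a $k$-factor containing $e$, then $F\triangle F'$ decomposes into cyclically alternating closed walks, one of which contains $e$, and conversely the swap $F\triangle Z$ works. So asserting that the walk can be built begs the question. Second, and decisively, the only hypotheses your sketch invokes (positive degree in $F$ and in $G-F$ at every vertex, plus $2$-edge-connectivity of $G$) are provably insufficient, because the ``containing $e$'' conclusion is false once $k>\frac{2\Delta}{3}$, and your step 2 never reuses that bound. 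Concretely, take two vertices $u,v$ and four disjoint blobs $D_1,D_2,A,B$, each a $K_5$ with one doubled edge (so each blob has three vertices of degree $4$); join $D_1$ by two edges to $u$ and one to $v$, join $D_2$ by two edges to $v$ and one to $u$, join $A$ and $B$ each by one edge to $u$ and one to $v$, and add the edge $e=xy$ between the last degree-$4$ vertex $x$ of $A$ and the last degree-$4$ vertex $y$ of $B$. The resulting multigraph $G$ is $5$-regular and $2$-edge-connected, and it has perfect matchings, hence $4$-factors; but $G-e-\{u,v\}$ has four odd components, so $G-e$ has no perfect matching, i.e.\ every perfect matching of $G$ uses $e$, i.e.\ \emph{no} $4$-factor of $G$ contains $e$. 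Here $\Delta=5$, $k=4$ is even, every vertex has positive degree in $F$ and $G-F$ for any $4$-factor $F$, and $G$ is $2$-edge-connected, yet no alternating closed walk through $e$ exists (otherwise swapping would produce a forbidden $4$-factor through $e$); the only hypothesis violated is $k\le\frac{2\Delta}{3}$. Hence any correct completion must run the quantitative argument a second time for the edge-forced version, which is exactly the route you rejected as fragile: apply Tutte's theorem to $G-e$ with $f(x)=f(y)=k-1$ and $f\equiv k$ elsewhere, and redo the component analysis (the odd/even classification of components changes near $x$ and $y$, and both hypotheses are consumed again). That is essentially Kano's proof.
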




\subsubsection*{Shannon graphs}

Given an integer $k$, the \emph{Shannon graph} $Sh(k)$ is the graph made of three vertices connected by $\lfloor \frac{k}{2} \rfloor$, $\lfloor \frac{k}{2} \rfloor$ and $\lceil \frac{k}{2} \rceil$ edges respectively. See Figure~\ref{fig:shanon}. Observe that 
\begin{itemize}
    \item $\Delta(Sh(k))=k$,
    \item  when $k$ is even, $Sh(k)$ is $k$-regular and has $\frac{3k}{2}$ edges and,
    \item when $k$ is odd, $Sh(k)$ has two vertices of degree $k$ and one vertex of degree $k-1$ and has $\frac{3k-1}{2}$ edges. 
\end{itemize}

    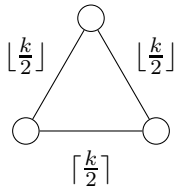
\begin{figure}[H]
    \centering
    \begin{tikzpicture}
        \begin{scope}
        \vertex (1) at (0,1) {};
        \vertex (2) at (-0.866,-0.5) {};
        \vertex (3) at (0.866,-0.5) {};
        \node (4) at (-0.866,0.5) {$\lfloor \frac{k}{2} \rfloor$};
        \node (5) at (0.866,0.5) {$\lfloor \frac{k}{2} \rfloor$};
        \node (6) at (0,-1) {$\lceil \frac{k}{2} \rceil$};
        \draw (1) -- (2);
        \draw (2) -- (3);
        \draw (3) -- (1);
        \end{scope}
    \end{tikzpicture}
    \caption{The Shannon graph $Sh(k)$} \label{fig:shanon}
    \end{figure}

\begin{lemma}\label{lem:shannon_tight}
Let $k,d \in \mathbb N^*$ with $d$ odd. Then $\chi'_d(Sh(k)) = \lceil\frac{3k-1}{3d-1}\rceil$. 
\end{lemma}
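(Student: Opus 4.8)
The plan is to verify the two inequalities separately, writing $Sh(k)$ as a multigraph on three vertices $x,y,z$ joined by bundles of $A=B=\lfloor k/2\rfloor$ parallel edges (on the pairs $\{x,y\}$ and $\{y,z\}$) and $C=\lceil k/2\rceil$ parallel edges (on $\{x,z\}$), so that the total number of edges is $m=\lceil\tfrac{3k-1}{2}\rceil$. A colour class of defect $d$ is exactly a sub-multigraph of $Sh(k)$ with all three vertex-degrees at most $d$, which is what makes the three-vertex structure so rigid and both bounds computable.

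For the \emph{lower bound}, I first bound the size of a single colour class. If a class uses $a,b,c$ edges from the three bundles then its degrees at $x,y,z$ are $a+c,\ a+b,\ b+c$, each at most $d$; summing these and dividing by $2$ gives $a+b+c\le \lfloor \tfrac{3d}{2}\rfloor=\tfrac{3d-1}{2}$, where the last equality uses that $d$ is odd. Hence $\chi'_d(Sh(k))\ge \lceil \tfrac{2m}{3d-1}\rceil$. It then remains to check the arithmetic identity $\lceil \tfrac{2m}{3d-1}\rceil=\lceil \tfrac{3k-1}{3d-1}\rceil$. When $k$ is odd this is immediate since $2m=3k-1$. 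When $k$ is even we have $2m=3k$, and I would invoke the elementary fact that $\lceil \tfrac{n}{q}\rceil=\lceil \tfrac{n-1}{q}\rceil$ unless $q\mid(n-1)$, applied with $n=3k$ and $q=3d-1$: since $d$ is odd, $3d-1$ is even, while $3k-1$ is odd (as $k$ is even), so $3d-1\nmid 3k-1$ and the two ceilings agree.

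For the \emph{upper bound} I would exhibit an explicit partition into $t:=\lceil \tfrac{3k-1}{3d-1}\rceil$ classes. Writing $s=\tfrac{d-1}{2}$ (so $\tfrac{3d-1}{2}=3s+1$), the triples $(s,s,s)$, $(s{+}1,s,s)$, $(s,s{+}1,s)$ and $(s,s,s{+}1)$ all satisfy the pairwise bound $\le d$, so I assign each of the $t$ classes one of these templates. Distributing the "$+1$'s" amounts to choosing nonnegative integers $p,q,r$ (the number of classes carrying the extra edge on bundle $A$, $B$, $C$ respectively) that cover the deficits $\max(0,A-ts)$, $\max(0,B-ts)$, $\max(0,C-ts)$; after this one fills the bundles down to their exact sizes $A,B,C$, which only lowers degrees. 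The one nontrivial point is to guarantee $p+q+r\le t$. This follows from the capacity inequality $t(3s+1)\ge m$ — which in turn comes from $t(3d-1)\ge 3k-1\ge 2m-1$ together with a parity remark (the left side is even) — combined with the fact that the three bundle sizes differ pairwise by at most one, so that in the "mixed" regime the total deficit is at most one.

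The main obstacle I expect is precisely this last bookkeeping in the upper bound: showing the rounded-up distribution $p+q+r$ never exceeds $t$, which is exactly where the near-equality of the three bundles (and the evenness of $3d-1$) is used. The lower bound, by contrast, reduces to a one-line counting argument plus the divisibility/parity observation. I note that the upper bound can alternatively be obtained immediately as the special case $\Delta=k$ of Theorem~\ref{thm:main_theorem}, so only the lower bound is genuinely needed to establish tightness there.
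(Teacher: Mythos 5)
Your proof is correct, and while your lower bound coincides with the paper's, your upper bound takes a genuinely different route. Both lower bounds rest on the same counting fact: a defect-$d$ subgraph of $Sh(k)$ has at most $\frac{3d-1}{2}$ edges, forced by the degree-sum bound $2(a+b+c)\le 3d$ and integrality when $d$ is odd. For the upper bound, the paper orders the edges $e_1,e_2,\dots$ so that any three consecutive edges form a triangle, shows (by an induction left to the reader) that every contiguous block of $\frac{3d-1}{2}$ edges induces a subgraph of maximum degree at most $d$, and colours the blocks greedily; this makes the class sizes automatically balanced. You instead distribute the three bundle sizes into per-class quotas built from the templates $(s,s,s)$, $(s{+}1,s,s)$, $(s,s{+}1,s)$, $(s,s,s{+}1)$ with $s=\frac{d-1}{2}$, reducing the whole construction to the single inequality $p+q+r\le t$; your case split for it is sound (when all three deficits are positive, their sum is $m-3ts\le t$ by the capacity bound $t(3s+1)\ge m$; otherwise $A-ts\le 0$ forces $B-ts\le 0$ and $C-ts\le 1$, so the total deficit is at most $1$). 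Your write-up also justifies two points the paper asserts without proof: the ceiling identity $\lceil\frac{3k}{3d-1}\rceil=\lceil\frac{3k-1}{3d-1}\rceil$ for even $k$ (via the parity of $3d-1$ versus $3k-1$) and the exact size bound on a colour class. Finally, your closing remark is legitimate and non-circular: the proof of Theorem~\ref{thm:main_theorem} never invokes this lemma (the lemma is cited there only for tightness), so its upper bound could indeed replace your explicit construction; the paper proves the lemma self-contained, presumably because it is stated before that theorem.
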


\begin{proof}
Consider an ordering $(e_i)_{1 \leq i \leq |E(Sh(k))|}$ of the edges of $Sh(k)$ such that for any $1 \leq i \leq |E(Sh(k))| - 2$, $e_i$, $e_{i+1}$ and $e_{i+2}$ forms a triangle. 
Such an ordering can be obtained by setting $e_{1}$ to be any edge with both extremities of degree $k$ and then setting, for $i=2, \dots, |E(Sh(k))| - 1$, $e_{i+1}$ to be any unnumbered edge coming right after $e_{i}$ in clockwise order. 
The following statement is easily proven using induction:
\emph{
    For every odd integer $\ell$ such that $3 \leq \ell \leq \frac{2|E(Sh(k)|-1}{3}$, every contiguous subsequence of $(e_{i})_{1 \leq i \leq |E(Sh(k))|}$ of length $\frac{3\ell - 1}{2}$ induces a graph of maximum degree $\ell$.
}

Thus, colouring the first $\frac{3d-1}{2}$ edges of $(e_i)_{1 \leq i \leq |E(Sh(k))|}$ in one colour, the following $\frac{3d-1}{2}$ in a second colour and so on, yields a colouring with at most $\lceil \frac{|E(Sh(k))|}{\frac{3d-1}{2}} \rceil$ colours such that each colour class induces a subgraph with maximum degree at most $d$, and each colour class except at most one has $\frac{3d-1}{2}$ edges. Since every subgraph of $Sh(k)$ with maximum degree $d$ (recall that $d$ is odd) has at most $\frac{3d-1}{2}$ edges, this colouring is an optimal d-defective edge colouring and thus: 
\[ \chi'_d(Sh(k)) = \Bigl\lceil \frac{|E(Sh(k))|}{\frac{3d-1}{2}} \Bigr\rceil = 
  \begin{cases}
     \lceil \frac{3k}{3d-1} \rceil = \lceil \frac{3k-1}{3d-1} \rceil   & \text{ if } k \text{ is even} \\
     \lceil \frac{3k-1}{3d-1} \rceil & \text{ if } k \text{ is odd}
  \end{cases}
\]
\end{proof}


\section{Generalization of Shannon Theorem} \label{sec:multi}


The goal of this section is to prove Theorem~\ref{thm:main_theorem}. The case where $d$ is even was already known, we give the proof anyway for completeness. 
\begin{theorem}\cite{HILTON2001253, amini:inria-00144318}\label{thm:deven}
Let $d, \Delta \in \mathbb N^*$ with $d$ even. For every $\Delta$-regular graph $G$, $\chi^{'}_{d}(G) = \lceil \frac{\Delta}{d} \rceil$.
\end{theorem}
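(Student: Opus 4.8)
The plan is to pair the trivial lower bound with a matching upper bound built by repeatedly extracting $d$-factors. Lemma~\ref{lem:trivialbound} already gives $\chi'_d(G) \geq \lceil \frac{\Delta}{d} \rceil$, so it suffices to produce a $(\lceil \frac{\Delta}{d} \rceil, d)$-edge colouring. I would first dispose of the trivial range $d \geq \Delta$: there $\lceil \frac{\Delta}{d} \rceil = 1$ and the single colour class $E(G)$ has maximum degree $\Delta \leq d$, so nothing is to be shown. Hence assume $d < \Delta$ from now on.

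The core case is $\Delta$ even, handled by peeling off $d$-factors one at a time. Since $d$ is even and $d \leq \Delta$, Theorem~\ref{thm:petersen} yields a $d$-factor $F_1$; deleting its edges leaves a $(\Delta - d)$-regular graph, which is again even-regular, so Theorem~\ref{thm:petersen} applies anew. Iterating produces edge-disjoint $d$-factors $F_1, \dots, F_q$ with $q = \lfloor \frac{\Delta}{d} \rfloor$, after which the remaining graph is $r$-regular with $r = \Delta - qd = \Delta \bmod d < d$. Assigning each $F_i$ its own colour, and the leftover graph (of maximum degree $r \leq d$) one further colour whenever $r > 0$, uses exactly $\lfloor \frac{\Delta}{d}\rfloor$ colours when $d \mid \Delta$ and one more otherwise, i.e.\ $\lceil \frac{\Delta}{d} \rceil$ colours in total, every class having maximum degree at most $d$.

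For $\Delta$ odd, Theorem~\ref{thm:petersen} does not apply directly, so I would invoke Lemma~\ref{lem:chi_increasing}: embed $G$ into a $(\Delta+1)$-regular graph $H$, apply the even case to $H$, and conclude $\chi'_d(G) \leq \chi'_d(H) \leq \lceil \frac{\Delta+1}{d} \rceil$. The one genuinely delicate point in an otherwise routine argument is checking that this does not overshoot the target. The quantity $\lceil \frac{\Delta+1}{d} \rceil$ exceeds $\lceil \frac{\Delta}{d} \rceil$ precisely when $d \mid \Delta$ (the ceiling $\lceil \frac{n}{d}\rceil$ increments, as $n$ increases by one, exactly when $n$ is a multiple of $d$). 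Since $d$ is even and $\Delta$ is odd we have $d \nmid \Delta$, so the two ceilings coincide and $\chi'_d(G) \leq \lceil \frac{\Delta}{d} \rceil$. Combined with Lemma~\ref{lem:trivialbound}, this gives the claimed equality, and the parity bookkeeping in this last step is the only place where care is required.
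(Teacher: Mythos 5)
Your proposal is correct and follows essentially the same route as the paper: the trivial lower bound from Lemma~\ref{lem:trivialbound}, Petersen's theorem (Theorem~\ref{thm:petersen}) for even $\Delta$, and Lemma~\ref{lem:chi_increasing} together with the parity observation $\lceil \frac{\Delta+1}{d}\rceil = \lceil \frac{\Delta}{d}\rceil$ (valid since $d$ even and $\Delta$ odd force $d \nmid \Delta$) for odd $\Delta$. You merely make explicit two points the paper leaves implicit — the iterated extraction of $d$-factors behind the upper bound, and the trivial case $d \geq \Delta$ needed so that Petersen's hypothesis $d \leq \Delta$ applies — which is sound but not a different argument.
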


\begin{proof}
If $\Delta$ is even, then $\chi'_d(G) \leq \lceil \frac{\Delta(G)}{d} \rceil$ by Theorem~\ref{thm:petersen}. 
If $\Delta$ is odd, then 
$\lceil \frac{\Delta+1}{d} \rceil = \lceil \frac{\Delta}{d} \rceil $  
and by Lemma~\ref{lem:chi_increasing}, 
$\chi'_d(G) \leq  \lceil \frac{\Delta}{d} \rceil$.
Equality holds in both cases by Lemma~\ref{lem:trivialbound}.  
\end{proof}

\begin{theorem}
Let $d, \Delta \in \mathbb N^*$ with $d$ odd. 
Then every graph $G$ with maximum degree $\Delta$ is $(\lceil \frac{3\Delta - 1}{3d - 1} \rceil, d)$-edge colourable. 
\end{theorem}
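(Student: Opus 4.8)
The plan is to reduce from general graphs to regular graphs, and then to
attack the regular case by peeling off factors whose removal cuts the maximum
degree by exactly $3d-1$ each time, mirroring how the Shannon graph was coloured
in Lemma~\ref{lem:shannon_tight} (where contiguous blocks of $\frac{3d-1}{2}$
edges form subgraphs of maximum degree $d$).

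First I would reduce to the regular case. Any graph $G$ with maximum degree
$\Delta$ embeds into a $\Delta$-regular graph (add a disjoint ``mirror'' copy as
in Lemma~\ref{lem:chi_increasing}, or pad degrees in the standard way), so it
suffices to prove the bound for $\Delta$-regular graphs, and by
Lemma~\ref{lem:chi_increasing} I may freely increase $\Delta$ by one when that
is convenient for parity; in particular I can assume $\Delta$ is odd, matching
the hypotheses of Theorem~\ref{lem:factor2/3}. I would also pass to a
$2$-edge-connected (indeed connected, and then $2$-edge-connected after a small
adjustment) graph, since Theorem~\ref{lem:factor2/3} requires $2$-edge
connectivity; this connectivity bookkeeping is a likely source of technical
friction.

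The core idea is an inductive peeling. With $\Delta$ odd and $d$ odd, I want to
remove from $G$ a subgraph $H$ of maximum degree exactly $d$ such that the
remainder $G - H$ has maximum degree at most $\Delta - (3d-1)/2$\,---\,wait, the
accounting must be done at the level of factors. Concretely, I would split the
target degree drop of $3d-1$ into an even factor plus an odd adjustment: using
Theorem~\ref{lem:factor2/3} I can extract an even $k$-factor with $k \le
\frac{2\Delta}{3}$, and then use Lemma~\ref{lem:delta_even_kV_even} and
Lemma~\ref{lem:delta_even_kV_odd} to split an even-regular graph into two pieces
of half the degree. Combining these, the goal is to write $G$ as an edge-disjoint
union of a subgraph of maximum degree $d$ and a graph of maximum degree
$\Delta - (3d-1)$ (the $(3d-1)$-periodicity being exactly what gives
$\lceil\frac{3\Delta-1}{3d-1}\rceil$ colours overall). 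Iterating this step, one
colour at a time, and handling the base cases where $\Delta \le$ some small
multiple of $d$ directly, yields the claimed number of colours.

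The main obstacle, I expect, is extracting a single colour class (a subgraph of
maximum degree $\le d$) that reduces the maximum degree by the full $3d-1$ while
keeping the leftover graph in a state where the induction hypothesis (or
Theorem~\ref{lem:factor2/3}) still applies\,---\,that is, still
$2$-edge-connected and of the right parity. Since $d$ is odd, $3d-1$ is even, so
a degree drop of $3d-1$ is compatible with Petersen-type even-factor extraction,
but I must simultaneously control that the removed subgraph has maximum degree at
most $d$ and not merely small average degree; this is precisely where the
Shannon-graph analysis and the $\frac{2\Delta}{3}$ threshold of
Theorem~\ref{lem:factor2/3} must be invoked carefully, and where the odd/even
parity of $\Delta$, the possible need to add a dummy edge $e$, and the
connectivity hypotheses all interact. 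I would therefore spend most of the effort
verifying that one peeling step is always legal, after which the induction is
routine.
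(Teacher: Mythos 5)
There is a genuine gap, and it is in the core of your plan: the peeling step you describe is arithmetically impossible. You propose to write $G$ as an edge-disjoint union of a subgraph $H$ of maximum degree at most $d$ (one colour class) and a remainder of maximum degree at most $\Delta - (3d-1)$, ``one colour at a time.'' But if $\Delta(H) \leq d$, then any vertex of degree $\Delta$ still has degree at least $\Delta - d$ in $G - H$, and $\Delta - d > \Delta - (3d-1)$ for every $d \geq 1$; so a single colour class can never buy a degree drop of $3d-1$. The bound $\lceil \frac{3\Delta-1}{3d-1}\rceil$ corresponds to an average of only $\frac{3d-1}{3} < d$ degrees per colour, so the correct exchange rate is \emph{three} colours per $(3d-1)$-factor, not one colour per $3d-1$ degrees. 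This is exactly how the paper proceeds: it peels off a $(3d-1)$-factor $F$ (Petersen when $\Delta$ is even, Theorem~\ref{lem:factor2/3} when $\Delta$ is odd, which applies once $\Delta \geq 5d-1$), and then colours $F$ with three colours by extracting from $F$ a $2d$-factor (Petersen again, since $3d-1$ is even), splitting that $2d$-factor along Eulerian tours (Lemmas~\ref{lem:delta_even_kV_even} and~\ref{lem:delta_even_kV_odd}) into two graphs of maximum degree at most $d$ plus a matching $M$, and taking the leftover $(d-1)$-factor together with $M$ as the third class. Your variant --- splitting an even factor ``into two pieces of half the degree'' --- also fails to produce legal colour classes, since half of $3d-1$ is $\frac{3d-1}{2} > d$ for $d \geq 3$.

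Two further problems would remain even if the peeling were fixed. First, you cannot ``freely increase $\Delta$ by one'' for parity: Lemma~\ref{lem:chi_increasing} only transfers colourability from $(\Delta+1)$-regular graphs when the target number of colours is unchanged, which fails precisely at the values of $\Delta$ where $\lceil\frac{3\Delta-1}{3d-1}\rceil$ jumps (the paper instead restricts attention to those ``special'' values, and when such a $\Delta$ is even it avoids Theorem~\ref{lem:factor2/3} altogether via Petersen). Second, the $2$-edge-connectivity hypothesis of Theorem~\ref{lem:factor2/3} that you flag as ``technical friction'' is a real obstruction, not bookkeeping: the paper resolves it with a minimal-counterexample argument showing that a bridge forces a pendant copy of $Sh(\Delta)$, that there is at most one bridge, and that a local edge-swap then restores $2$-edge-connectivity so the factor theorem applies. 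Finally, the base cases are not routine: the induction step needs $\Delta \geq 5d-1$, and the five special values $\Delta \in \{d,\, 2d-1,\, 3d-1,\, 4d-1,\, 5d-2\}$ each require their own argument (the case $\Delta = 4d-1$, for instance, needs a doubling trick to colour a $(2d-1)$-factor plus a matching with two colours).
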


\begin{proof}
If $d = 1$, then the result corresponds to the classic result of Shannon, so we may assume that $d \geq 3$. 

First observe that it is enough to prove the result for regular graphs.
Indeed, if $G$ is not $\Delta$-regular, we can build a $\Delta$-regular graph $H$ containing $G$ as a subgraph  as follows: take two copies of $G$, and for each vertex $v$ of $G$, add $\Delta -d(v)$ edges between the two copies of $v$. Now, if $H$ is $(\lceil \frac{3\Delta - 1}{3d - 1} \rceil, d)$-edge colourable, then $G$ is too. 
Moreover, by Lemma~\ref{lem:chi_increasing}, it is enough to prove it for values of $\Delta$ such that $\lceil \frac{3\Delta - 1}{3d - 1} \rceil < \lceil \frac{3(\Delta +1) - 1}{3d - 1} \rceil$. We call such integers \textit{special}.

Let $G$ be a counterexample that minimizes  $\Delta$ and has minimum size. That is, $\Delta$ is special, $G$ is $\Delta$-regular, $\chi'_d(G) = \lceil \frac{3\Delta - 1}{3d - 1} \rceil +1$, every $\Delta$-regular graph with less vertices than $G$ is $(\lceil \frac{3\Delta - 1}{3d - 1} \rceil, d)$-edge colourable, and for every special integer $\Delta' < \Delta$, every $\Delta'$-regular graph is $( \lceil \frac{3\Delta' - 1}{3d - 1} \rceil, d)$-edge colourable. 
The result is trivial when $\Delta \leq 2$, so we can also assume $\Delta \geq 3$. 
\medskip

\begin{claim}\label{lem:extremity_shannon}
If $G$ has a bridge $e$, then a connected component of $G - e$ is isomorphic to $Sh(\Delta)$. 
\end{claim}

\begin{proofclaim}
Set $e = ab$ and let $A$ and $B$ be the two connected components of $G - e$ containing $a$ and $b$ respectively. Assume for contradiction that neither $A$ nor $B$ is isomorphic to $Sh(\Delta)$. 
Vertices of $A$ have degree $\Delta$ in $A$ except for $a$ that has degree $\Delta -1$.  Hence, we cannot have $|V(A)| \in \{1, 2\}$, nor $|V(A)| = 3$ as otherwise $A$ is isomorphic to $Sh(\Delta)$. 
We can thus assume $|V(A)| \geq 4$.    

Let $G_A$ be the graph obtained from $G$ by replacing $A$ by $Sh(\Delta)$ as in  Figure~\ref{fig:bridge}. $G_A$ is $\Delta$-regular and has strictly less vertices then $G$. Hence, by minimality of $G$, $G_A$ admits an edge colouring $c_A$ with defect $d$ using at most  $\lceil \frac{3\Delta - 1}{3d - 1} \rceil$ colours. 
We define symmetrically $G_B$ and $c_B$. We may assume, by properly permuting colours in $G_B$, that $c_B(e) = c_A(e)$. 
 We can now obtain an edge colouring of $G$ with defect $d$ using at most $\lceil \frac{3\Delta - 1}{3d - 1} \rceil$ colours by assigning colour $c_A(e)$ to $e$, colour $c_{A}(e')$ to any edge $e'$ in $B$ and colour $c_{B}(e')$ to any edge $e'$ in $A$. 
\end{proofclaim}

    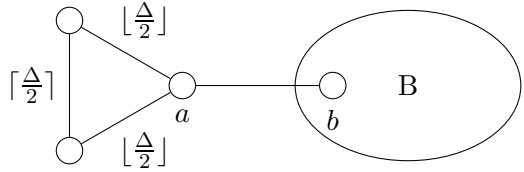
\begin{figure}[H]
    \centering
    \begin{tikzpicture}
        \vertex[label=below:$a$] (1) at (1,0) {};
        \vertex (2) at (-0.5,-0.866) {};
        \vertex (3) at (-0.5,0.866) {};
        \node (4) at (0.5,-0.866) {$\lfloor \frac{\Delta}{2} \rfloor$};
        \node (5) at (0.5,0.866) {$\lfloor \frac{\Delta}{2} \rfloor$};
        \node (6) at (-1,0) {$\lceil \frac{\Delta}{2} \rceil$};
        \node (8) at (4,0) {B};
        \draw [rotate around={90:(8)}] (8) ellipse (1cm and 1.5cm);
        \vertex[label=below:$b$] (7) at (3,0) {};
        \draw (1) -- (2);
        \draw (2) -- (3);
        \draw (3) -- (1);
        \draw (7) -- (1);
    \end{tikzpicture}
    \caption{the graph $G_A$} \label{fig:bridge}
    \end{figure}
    
Observe that, if a $\Delta$-regular graph has a bridge, then $\Delta$ must be odd. Moreover, if $\Delta$ is odd, for every $(\frac{3\Delta-1}{3d-1}, d)$ edge colouring  of $Sh(\Delta)$,  there is a colour $c$ such that the (unique) vertex of $Sh(\Delta)$ with degree $\Delta -1$ is incident with at most $d-1$ edges coloured with $c$. This simple observation is used in the proof of the following claim.

\begin{claim}\label{lem:at_most_one_bridge}
$G$ has at most one bridge.
\end{claim}

\begin{proofclaim}
Suppose for contradiction that $G$ has two bridges $uv$ and $u'v'$. By \eqref{lem:extremity_shannon}, we may assume that $G$ is made of two disjoint copies of $Sh(\Delta)$ plus a graph $A$ as in Figure~\ref{fig:two_bridges} (note that $u=u'$ is possible. See Figure~\ref{fig:two_bridges2}). 

Assume first that $u \neq u'$. Then $A + uu'$ is $\Delta$-regular and has strictly less vertices than $G$. So $A + uu'$ admits an edge colouring $c_A$ with defect $d$ using at most $\lceil \frac{3\Delta - 1}{3d - 1} \rceil$ colours.  We can extend this colouring to $G$ by giving colour $c_A(uu')$ to $uv$ and $u'v'$ and then extend this colouring to the two copies of $Sh(\Delta)$ (this is possible by the observation stated right before the claim).  

    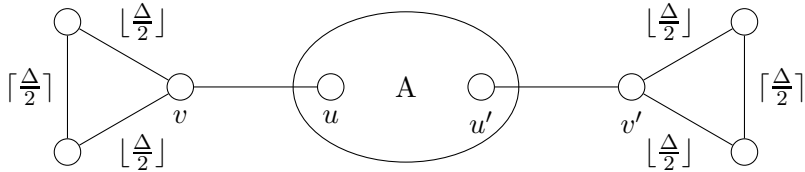
\begin{figure}[H]
    \centering
    \begin{tikzpicture}
        \begin{scope}
        \vertex[label=below:$v$] (1) at (1,0) {};
        \vertex (2) at (-0.5,-0.866) {};
        \vertex (3) at (-0.5,0.866) {};
        \node (4) at (0.5,-0.866) {$\lfloor \frac{\Delta}{2} \rfloor$};
        \node (5) at (0.5,0.866) {$\lfloor \frac{\Delta}{2} \rfloor$};
        \node (6) at (-1,0) {$\lceil \frac{\Delta}{2} \rceil$};
        \node (8) at (4,0) {A};
        \draw [rotate around={90:(8)}] (8) ellipse (1cm and 1.5cm);
        \vertex[label=below:$u$] (7) at (3,0) {};
        
        \vertex[label=below:$v'$] (9) at (7,0) {};
        \vertex (10) at (8.5,-0.866) {};
        \vertex (11) at (8.5,0.866) {};
        \node (12) at (7.5,-0.866) {$\lfloor \frac{\Delta}{2} \rfloor$};
        \node (12) at (7.5,0.866) {$\lfloor \frac{\Delta}{2} \rfloor$};
        \node (13) at (9,0) {$\lceil \frac{\Delta}{2} \rceil$};
        \vertex[label=below:$u'$] (14) at (5,0) {};
        
        \draw (9) -- (10);
        \draw (10) -- (11);
        \draw (11) -- (9);
        \draw (14) -- (9);

        \draw (1) -- (2);
        \draw (2) -- (3);
        \draw (3) -- (1);
        \draw (7) -- (1);
        \end{scope}

    \end{tikzpicture}
    \caption{the graph $G$ when $u \neq u'$}
    \label{fig:two_bridges}
    \end{figure}

Assume now that $u = u'$. We consider the graph $G'$ obtained by replacing the two copies of $Sh(\Delta)$ by four new vertices as in Figure~\ref{fig:two_bridges2}. 
As $G'$ is $\Delta$-regular and has two vertices less then $G$,  it is $(\lceil \frac{3 \Delta - 1}{3d - 1} \rceil,d)$-colourable. The obtained colouring of $A$ can easily be extended to the two copies of $Sh(\Delta)$ without any new colour (this is again possible by the observation  stated right before the claim).
\end{proofclaim}

    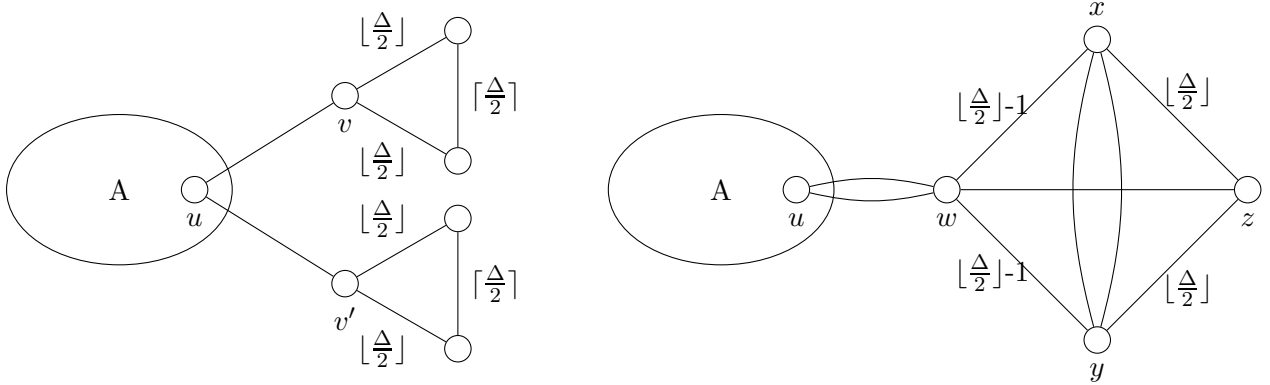
\begin{figure}[H]
    \centering
    \begin{tikzpicture}
        \begin{scope}
        \vertex[label=below:$v$] (1) at (7,1.25) {};
        \vertex (2) at (8.5,0.383) {};
        \vertex (3) at (8.5,2.116) {};
        \node (4) at (7.5,2.116) {$\lfloor \frac{\Delta}{2} \rfloor$};
        \node (5) at (7.5,0.383) {$\lfloor \frac{\Delta}{2} \rfloor$};
        \node (6) at (9,1.25) {$\lceil \frac{\Delta}{2} \rceil$};

        \node (8) at (4,0) {A};
        \draw [rotate around={90:(8)}] (8) ellipse (1cm and 1.5cm);
        \vertex[label=below:$u$] (7) at (5,0) {};
        
        \vertex[label=below:$v'$] (9) at (7,-1.25) {};
        \vertex (10) at (8.5,-2.116) {};
        \vertex (11) at (8.5,-0.383) {};
        \node (12) at (7.5,-2.116) {$\lfloor \frac{\Delta}{2} \rfloor$};
        \node (12) at (7.5,-0.383) {$\lfloor \frac{\Delta}{2} \rfloor$};
        \node (13) at (9,-1.25) {$\lceil \frac{\Delta}{2} \rceil$};
        
        \draw (9) -- (10);
        \draw (10) -- (11);
        \draw (11) -- (9);
        \draw (7) -- (9);

        \draw (1) -- (2);
        \draw (2) -- (3);
        \draw (3) -- (1);
        \draw (1) -- (7);
        \end{scope}

        \begin{scope}[xshift = 8cm]
        
        \vertex[label=below:$u$] (1) at (5,0) {};
        \node (2) at (4,0) {A};
        \draw [rotate around={90:(2)}] (2) ellipse (1cm and 1.5cm);
        \vertex[label=below:$w$] (3) at (7,0) {};
        \vertex[label=above:$x$] (4) at (9,2) {};
        \vertex[label=below:$y$] (5) at (9,-2) {};
        \vertex[label=below:$z$] (6) at (11,0) {};
        
        \node[label=above:$\lfloor \frac{\Delta}{2} \rfloor$-$1$] () at ($(3)!0.3!(4)$) {};
        \node[label=below:$\lfloor \frac{\Delta}{2} \rfloor$-$1$] () at ($(3)!0.3!(5)$) {};
        \node[label=above:$\lfloor \frac{\Delta}{2} \rfloor$] () at ($(6)!0.4!(4)$) {};
        \node[label=below:$\lfloor \frac{\Delta}{2} \rfloor$] () at ($(6)!0.4!(5)$) {};

        \draw (1) to[bend left = 13] (3);
        \draw (1) to[bend right = 13] (3);
        
        \draw (3) -- (4);
        \draw (3) -- (5);
        \draw (3) -- (6);
        \draw (4) -- (6);
        \draw (5) -- (6);
        \draw (4) to[bend left = 15] (5);
        \draw (4) to[bend right = 15] (5);
        
        \end{scope}
    \end{tikzpicture}
    \caption{On the right: the graph $G$ when $u=u'$, on the left: the graph $G'$.} \label{fig:two_bridges2}
    \end{figure}

\begin{claim}\label{lem:delta_odd_k_even_extract_factor}
$G$ has a $k$-factor for every even integer $k \leq \frac{2\Delta}{3}$. 
\end{claim}

\begin{proofclaim}
Let $k \leq \frac{2 \Delta}{3}$ be an even integer. 
If $\Delta$ is even, the result holds by Theorem~\ref{thm:petersen}. So we may assume that $\Delta$ is odd. 
If $G$ is $2$-edge connected, then we are done by Theorem~\ref{lem:factor2/3}. 
So assume $G$ has a bridge $uv$. Let $A, B$ be the two connected components of $G \setminus uv$ with $u \in V(A)$ and $v \in V(B)$. By (\ref{lem:at_most_one_bridge}), $G$ has no other bridges and thus $A$ and $B$ are both $2$-edge-connected. 
By (\ref{lem:extremity_shannon}), one of $A$ or $B$ is isomorphic to $Sh(\Delta)$.  Without loss of generality, we suppose that it is $B$. 
Let $w$ and $x$ be the two other vertices of $B$. Let $y$ be a neighbour of $u$ in $A$. 
Consider $G' = G + uv + yw - uy - vw$ (see Figure~\ref{fig:yeah}). It is easy to check that $G'$ is $\Delta$-regular and $2$-edge-connected. Applying  Theorem~\ref{lem:factor2/3} on $G'$ with $e = wy$, $G'$ has a $k$-factor $F$ containing the edge $wy$. 
There exists an integer $s \leq k-1$ such that $F$ contains $s$ edges $wx$, and $k-s-1$ edges $wv$. So $F$ must contain $k-s$ edges $vx$ and thus $F$ contains exactly one edge $uv$. 
Hence, $F - uv - yw + uy + vw$ is a $k$-factor of $G$.
\end{proofclaim}

    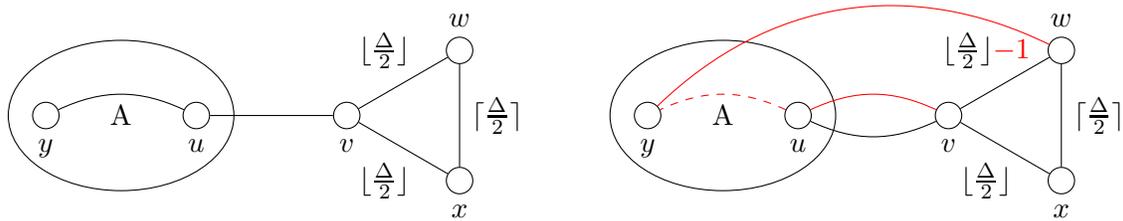
\begin{figure}[H]
    \centering
    \begin{tikzpicture}
        \begin{scope}[xshift = 0cm]
        \vertex[label=below:$v$] (1) at (-1,0) {};
        \vertex[label=below:$x$] (2) at (0.5,-0.866) {};
        \vertex[label=above:$w$] (3) at (0.5,0.866) {};
        \node (4) at (-0.5,-0.866) {$\lfloor \frac{\Delta}{2} \rfloor$};
        \node (5) at (-0.5,0.866) {$\lfloor \frac{\Delta}{2} \rfloor$};
        \node (6) at (1,0) {$\lceil \frac{\Delta}{2} \rceil$};
        \node (8) at (-4,0) {A};
        \draw [rotate around={90:(8)}] (8) ellipse (1cm and 1.5cm);
        \vertex[label=below:$u$] (7) at (-3,0) {};
        \vertex[label=below:$y$] (9) at (-5,0) {};
        \draw (1) -- (2);
        \draw (2) -- (3);
        \draw (3) -- (1);
        \draw (7) -- (1);
        \draw (7) to[bend right = 25] (9);
        \end{scope}
        
        \begin{scope}[xshift = 8cm]
        \vertex[label=below:$v$] (1) at (-1,0) {};
        \vertex[label=below:$x$] (2) at (0.5,-0.866) {};
        \vertex[label=above:$w$] (3) at (0.5,0.866) {};
        \node (4) at (-0.5,-0.866) {$\lfloor \frac{\Delta}{2} \rfloor$};
        \node (5) at (-0.5,0.866) {$\lfloor \frac{\Delta}{2} \rfloor \textcolor{red}{ - 1}$};
        \node (6) at (1,0) {$\lceil \frac{\Delta}{2} \rceil$};
        \node (8) at (-4,0) {A};
        \draw [rotate around={90:(8)}] (8) ellipse (1cm and 1.5cm);
        \vertex[label=below:$u$] (7) at (-3,0) {};
        \vertex[label=below:$y$] (9) at (-5,0) {};
        \draw (1) -- (2);
        \draw (2) -- (3);
        \draw (3) -- (1);
        \draw (7) to[bend right = 25] (1);
        \draw[red] (7) to[bend left = 25] (1);
        \draw[red] (9) to[bend left = 35] (3);
        \draw[dashed,red] (7) to[bend right = 25] (9);
        \end{scope}
        
    \end{tikzpicture}
    \caption{The graphs $G$ and $G'$} \label{fig:yeah}
    \end{figure}
    
Having established the three claims (1), (2) and (3), we can proceed to prove the theorem.
We begin by treating the five smallest special values of $\Delta$.   
\smallskip

\textbf{Case 1:} $\lceil \frac{3\Delta - 1}{3d - 1} \rceil = 1$, $\Delta = d$. The result holds trivially. 
\smallskip

\textbf{Case 2:}\label{case:delta_two_d_minus_one} 
$\lceil \frac{3\Delta - 1}{3d - 1} \rceil = 2$, $\Delta = 2d - 1$. 
We can apply (\ref{lem:delta_odd_k_even_extract_factor}) with $k = d - 1$ to get a $(d-1)$-factor of $G$, whose complementary in $G$ is a $d$-factor. Thus $\chi'_{d}(G) \leq 2$. This proves case 2. 
\smallskip

\textbf{Case 3:}\label{case:delta_three_d_minus_one} 
$\lceil \frac{3\Delta - 1}{3d - 1} \rceil = 3$, $\Delta = 3d - 1$. Observe that $\Delta$ is even. 
By Theorem~\ref{thm:petersen}, $G$ has a $2d$-factor $F$. 
By applying Lemma~\ref{lem:delta_even_kV_even} on connected components of even size of $F$ and Lemma~\ref{lem:delta_even_kV_odd} on connected components of odd size, we can extract  graphs $G_{A}$ and $G_{B}$ along with a matching $M$ such that $E(F) = E(G_A) \cup E(G_B) \cup M$, $\Delta(G_{A}) \leq d$, $\Delta(G_{B}) \leq d$. Now, $E(G)$ can be partitioned into $E(G_A)$, $E(G_B)$ and $E(G) \cup M \setminus F$, each of these sets  having maximum degree at most $d$. This proves case 3. 
\smallskip

\textbf{Case 4:} 
$\lceil \frac{3\Delta - 1}{3d - 1} \rceil = 4$, $\Delta = 4d - 1$. 
By applying (\ref{lem:delta_odd_k_even_extract_factor}) with $k = 2d$, we get a $2d$-factor $A$ of $G$, whose complementary in $G$ is a $(2d-1)$-factor $B$. By applying Lemma~\ref{lem:delta_even_kV_even} on components of $A$ of even size and Lemma~\ref{lem:delta_even_kV_odd} on components of $A$ of odd size, we get $A = A_{1} \cup A_{2} \cup M$ where $\Delta(A_{1}) \leq d$, $\Delta(A_{2}) \leq d$ and $M$ is a matching.

It remains to prove that $\chi'_{d}(B \cup M) \leq 2$. Let $C$ be a connected component of $B \cup M$. 
If every vertex of $C$ is incident with an edge of $M$, then $C$ has an even number of vertices and is $2d$-regular, and so $\chi'_d(C) = 2$ by Lemma~\ref{lem:delta_even_kV_even}. 
Assume now that there exists a vertex of $C$ that is not incident with an edge of $M$. Take two copies of $C$, and add an edge between the copies of each vertex of $C$ not incident with an edge of $M$. The obtained graph has an even number of vertices and is $2d$-regular, so it is $(2,d)$-edge colourable by Lemma~\ref{lem:delta_even_kV_even} and thus so is $C$. So each connected component of $B \cup M$ is $(2,d)$-edge colourable, and thus so is $B \cup M$. This proves case 4.
\smallskip

\textbf{Case 5:} 
$\lceil \frac{3\Delta - 1}{3d - 1} \rceil = 5$, $\Delta = 5d - 2$. 
By applying~(\ref{lem:delta_odd_k_even_extract_factor}) with $k = 3d - 1$, we can partition $G$ into a $(2d-1)$-factor $A$ and a $(3d-1)$-factor $B$. By respectively using cases 2 and 3, we have $\chi'_d(A) \leq 2$ and $\chi'_d(B)\leq 3$, so $\chi'_d(G) \leq 5$. This proves case 5. 
\medskip

We may now assume that $\Delta \geq 5d-1$. 
If $\Delta$ is even, then $G$ has a $(3d-1)$-factor by Petersen Theorem~(\ref{thm:petersen}), and if $\Delta$ is odd, since $3d-1 \leq \frac{2(5d-1)}{3} = \frac{2\Delta}{3}$ and $3d-1$ is even, $G$ admits a $(3d-1)$-factor by \eqref{lem:delta_odd_k_even_extract_factor}. 
Let $F$ be a $(3d-1)$-factor of $G$. 
By Case 3, $F$ is $(3,d)$-edge colourable. As $G - F$ is $(\Delta - (3d - 1))$-regular, by minimality of $\Delta$ we have:
$$\chi'_{d}(G-F) \leq \lceil \frac{3(\Delta - (3d - 1)) - 1}{3d - 1} \rceil,$$
and thus 
$$\chi'_{d}(G) \leq 3 + \lceil \frac{3(\Delta - (3d - 1)) - 1}{3d - 1} \rceil = \lceil \frac{3\Delta - 9d + 3 - 1 + 9d - 3}{3d-1} \rceil = \lceil \frac{3\Delta - 1}{3d - 1} \rceil.$$

\end{proof}


\section{Generalization of Vizing Theorem}\label{sec:simple}


In this section, we will only consider simple graphs. Vizing~\cite{V64} proved the following theorem :

\begin{theorem}[Vizing's Theorem,  \cite{V64}] \label{thm:vizing}
For every simple graph $G$ with maximum degree $\Delta$, $\chi'_{1}(G) \in \{ \Delta, \Delta + 1 \}$.
\end{theorem}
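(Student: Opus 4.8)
The lower bound $\chi'_1(G) \geq \Delta$ is immediate, since the $\Delta$ edges incident with a vertex of maximum degree must all receive distinct colours. The substance of the theorem is the bound $\chi'_1(G) \leq \Delta+1$, which I would establish by induction on $|E(G)|$. Remove a single edge $uv_1$ and colour $G - uv_1$ with the palette $\{1, \dots, \Delta+1\}$ by the induction hypothesis. Because every vertex has degree at most $\Delta$ while $\Delta+1$ colours are available, at each vertex at least one colour is \emph{free} (absent from its incident edges). The goal is then to permute colours so that $uv_1$ can also be coloured.

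The main device is a \emph{Vizing fan} at $u$: a maximal sequence $v_1, v_2, \dots, v_k$ of distinct neighbours of $u$ with $uv_1$ the uncoloured edge and, for each $i \geq 2$, the colour of $uv_i$ free at $v_{i-1}$. Such a fan is built greedily. I would pick a colour $c$ free at $u$ and a colour $d$ free at the last fan vertex $v_k$. If $d$ happens to be free at $u$ as well, a single \emph{fan rotation} finishes the argument: reassign to each $uv_i$ the colour currently carried by $uv_{i+1}$, and give $uv_k$ the colour $d$. One checks directly that this leaves a proper colouring in which every fan edge, including the previously uncoloured $uv_1$, is coloured.

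The remaining case uses a \emph{Kempe chain}. Consider the maximal path starting at $u$ whose edges alternate between the two colours $c$ and $d$. Swapping $c$ and $d$ along this path preserves properness of the colouring while altering which of $c, d$ is free at the endpoints. After the swap one identifies a well-defined initial segment $v_1, \dots, v_j$ of the fan such that $d$ is now free both at $v_j$ and at $u$, so rotating that segment and colouring $uv_j$ with $d$ completes the extension.

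The crux — and the one step where genuine care is needed — is the dichotomy governing which segment to rotate: there are exactly two candidate fan vertices whose rotation could succeed, and the maximal $cd$-path cannot obstruct both simultaneously. This rests on the elementary fact that each endpoint of a maximal $cd$-alternating path misses exactly one of $c$ and $d$, so the path's far endpoint cannot coincide with both candidates at once and at least one rotation is always available. Verifying this, together with the routine observation that a rotated fan remains a proper partial colouring, is the heart of the proof; the lower bound, the existence of a free colour, and the greedy construction of the fan are all straightforward.
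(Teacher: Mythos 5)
The paper offers no proof of this statement at all: it is quoted as Vizing's classical theorem with a citation to~\cite{V64}, and is then used downstream as a black box (in Corollary~\ref{coro:vizing} and in the reduction of Theorem~\ref{thm:NP}). So there is nothing in the paper to compare your argument against; judged on its own merits, your sketch is the standard fan-and-Kempe-chain proof and it is correct, including the slightly less common variant in which the Kempe chain is the maximal $cd$-alternating path starting at $u$ itself rather than at a fan vertex. A complete write-up would need to make explicit the two facts your sketch compresses. First, the identity of the ``two candidates'': when $d$ is not free at $u$, it is the \emph{maximality} of the fan that forces the $d$-edge at $u$ to be a fan edge $uv_{j+1}$ (otherwise its far endpoint would extend the fan, since $d$ is free at $v_k$); the candidates are then $v_j$ and $v_k$, both of which miss $d$, hence neither can be an interior vertex of the path, hence the path's far endpoint coincides with at most one of them --- exactly your dichotomy. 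Second, that the swap leaves the relevant portion of the fan intact: if the path misses $v_j$, the segment $v_1,\dots,v_j$ is untouched (its edges carry colours other than $c,d$, and freeness of those colours at $v_1,\dots,v_{j-1}$ is unaffected), so you rotate it and colour $uv_j$ with $d$; if instead the path ends at $v_j$, the fan edge $uv_{j+1}$ flips colour from $d$ to $c$, but the same swap makes $c$ free at $v_j$, so all of $v_1,\dots,v_k$ remains a fan and you rotate it fully, colouring $uv_k$ with $d$, which the swap left free at $v_k$. Both verifications go through, so your outline is a sound plan for the classical proof the paper chose to cite rather than reproduce.
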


While there are only $2$ possibilities, deciding between them was proven to be NP-complete even for regular simple graphs. 

\begin{theorem}[ Holyer~\cite{H81}, Leven and Galil~\cite{LZ83}]
\label{thm:chromatic_index_NP_complete}
For every $\Delta \geq 3$, it is NP-complete to decide if a $\Delta$-regular simple graph $G$ is $\Delta$-edge colourable. 
\end{theorem}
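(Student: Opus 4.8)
The plan is to treat membership and hardness separately. Membership in NP is immediate: a proper $\Delta$-edge-colouring of $G$ is a certificate of size $O(|E(G)|\log \Delta)$, and checking that every vertex is incident to $\Delta$ pairwise distinct colours is polynomial. For NP-hardness I would first settle the base case $\Delta = 3$ and then lift it to all $\Delta \geq 4$.

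For $\Delta = 3$ I would reduce from $3$-SAT, following Holyer. The structural fact driving the gadgets is that in a proper $3$-edge-colouring of a cubic graph each of the three colour classes is a perfect matching, so every vertex meets exactly one edge of each colour and the union of any two classes is a disjoint union of \emph{even} cycles. This rigidity is what lets one encode logic: a ``wire'' is a chain of small pieces along which a Boolean value is read off from which of two colours appears, so the value propagates without loss; a ``variable'' gadget is a cubic fragment with dangling wires admitting exactly two families of colourings, encoding true and false; and a ``clause'' gadget is a cubic fragment with three incoming wires that admits a proper $3$-edge-colouring if and only if at least one incoming literal is true. Splicing all gadgets (together with fan-out pieces to feed one variable into several clauses, and an inverter for negated literals) into a single cubic graph $G_\phi$ then gives $\chi'_1(G_\phi) = 3$ if and only if $\phi$ is satisfiable.

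To reach arbitrary $\Delta \geq 4$ I would reduce from the cubic case as Leven and Galil do: from a cubic instance $H$ construct a $\Delta$-regular graph $G$ with $\chi'_1(G) = \Delta$ if and only if $\chi'_1(H) = 3$. Here $H$ is kept as a ``core'' carrying the hard instance, and each degree is raised from $3$ to $\Delta$ by attaching a degree-padding gadget contributing $\Delta - 3$ extra edges at each vertex. The gadget is designed so that its edges are always colourable from a fixed palette disjoint from the three core colours whenever the core is colourable, while forcing non-colourability exactly when the core is not; the two parities of $\Delta$, and the parity of the vertex count (which must be even for a $\Delta$-regular Class $1$ graph, since each colour class is a perfect matching), are handled by standard doubling, e.g.\ taking two copies joined by a matching as in Lemma~\ref{lem:chi_increasing}.

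The main obstacle in both regimes is the gadget-correctness verification rather than the overall scheme. One must show that \emph{every} proper colouring of the assembled graph restricts on each gadget to one of the few intended legal states, so that no spurious colouring can certify an unsatisfiable formula, and dually that each legal combination of states extends to a global colouring. For the degree-raising step the delicate point is ensuring the padding gadget neither accidentally supplies a colour missing from the core (which would make an uncolourable instance colourable) nor blocks a colour the core requires, and that this holds uniformly across both parities of $\Delta$.
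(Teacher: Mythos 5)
First, note that the paper does not prove this statement at all: it is quoted as a known theorem of Holyer and of Leven--Galil, and the paper's own contribution (Theorem~\ref{thm:NP}) merely uses it as the source of a reduction. So your proposal can only be measured against the classical proofs you are sketching, and there it has a genuine gap: the gadgets are never constructed. You describe what the variable, clause, fan-out and inverter gadgets ought to do (``admits exactly two families of colourings'', ``admits a proper $3$-edge-colouring if and only if at least one incoming literal is true''), and what the degree-padding gadget for $\Delta \geq 4$ ought to do, but exhibiting cubic (respectively $\Delta$-regular) graphs with these properties and verifying that \emph{every} proper colouring restricts on each gadget to a legal state is the entire mathematical content of Holyer's and Leven--Galil's papers; you concede as much in your final paragraph. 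A correct architecture with the load-bearing parts left as promises is not yet a proof.

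Second, one concrete step you do commit to would fail. To handle the parity of $\Delta$ you invoke ``standard doubling, e.g.\ taking two copies joined by a matching as in Lemma~\ref{lem:chi_increasing}''. That construction only yields monotonicity of an upper bound: if $G$ is $\Delta$-edge-colourable then the doubled graph $H$ is $(\Delta+1)$-edge-colourable, but the converse is false in general --- $H$ can be class~1 while $G$ is class~2, because the added perfect matching can repair the obstruction (and, dually, knowing $H$ is colourable tells you nothing about $G$). An NP-hardness reduction requires an equivalence in both directions, so hardness for degree $\Delta$ cannot be transferred to degree $\Delta+1$ by this device; this is precisely why Leven and Galil build purpose-made, parity-dependent padding gadgets rather than doubling. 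Until the gadgets are specified and their rigidity proved, and the parity step is replaced by a construction that preserves the class-1/class-2 distinction in both directions, the proposal remains an outline of the known proofs rather than a proof.
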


Vizing's theorem easily implies its following generalization to $d$-defective edge colouring. 
\begin{corollary}\label{coro:vizing}
For every $d \geq 1$ and every simple graph $G$ with maximum degree $\Delta$, $\chi'_{d}(G) \in \{ \lceil \frac{\Delta}{d} \rceil, \lceil \frac{\Delta+1}{d} \rceil \}$.
\end{corollary}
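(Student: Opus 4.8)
The plan is to sandwich $\chi'_d(G)$ between the two claimed values, using the trivial lower bound on one side and Vizing's Theorem together with a simple grouping argument on the other. The lower bound is immediate: by Lemma~\ref{lem:trivialbound} we already have $\chi'_d(G) \geq \lceil \frac{\Delta}{d} \rceil$, so the smaller of the two candidate values is a genuine floor for $\chi'_d(G)$.

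For the upper bound I would show $\chi'_d(G) \leq \lceil \frac{\Delta+1}{d} \rceil$. First I would invoke Vizing's Theorem (Theorem~\ref{thm:vizing}) to obtain a proper edge colouring of $G$ with at most $\Delta+1$ colours; equivalently, a partition of $E(G)$ into at most $\Delta+1$ matchings $M_1, \dots, M_{\Delta+1}$, each of maximum degree at most $1$. Next I would batch these matchings into consecutive groups of size $d$, namely partition the index set $\{1, \dots, \Delta+1\}$ into $\lceil \frac{\Delta+1}{d} \rceil$ blocks each containing at most $d$ indices. The union of the matchings in a single block is a subgraph in which every vertex is incident with at most $d$ edges, since it is a union of at most $d$ subgraphs of maximum degree $1$. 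Assigning one colour per block therefore yields a $d$-defective edge colouring of $G$ with at most $\lceil \frac{\Delta+1}{d} \rceil$ colours, giving the desired upper bound.

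Finally I would combine the two inequalities. Since $\lceil \frac{\Delta+1}{d} \rceil \leq \lceil \frac{\Delta}{d} \rceil + 1$ for all $\Delta, d \geq 1$, the two candidate values $\lceil \frac{\Delta}{d} \rceil$ and $\lceil \frac{\Delta+1}{d} \rceil$ are either equal or consecutive integers. As $\chi'_d(G)$ lies in the closed interval between them, it must equal one of the two, which is exactly the statement $\chi'_d(G) \in \{ \lceil \frac{\Delta}{d} \rceil, \lceil \frac{\Delta+1}{d} \rceil \}$.

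I do not expect any real obstacle here: the only points requiring a word of justification are that a union of $d$ matchings has maximum degree at most $d$ (immediate) and that the two ceilings differ by at most one (an elementary fact about the ceiling function). The genuine content is entirely imported from Vizing's Theorem, and the corollary is just a repackaging of a proper edge colouring into larger colour classes.
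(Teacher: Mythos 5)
Your proof is correct and matches the paper's own argument essentially verbatim: the lower bound via Lemma~\ref{lem:trivialbound}, and the upper bound by taking a proper $(\Delta+1)$-edge colouring from Vizing's Theorem and merging the matchings into groups of $d$ to get $\lceil \frac{\Delta+1}{d} \rceil$ colour classes of maximum degree at most $d$. Your explicit closing remark that the two ceilings differ by at most one (so the sandwich forces membership in the two-element set) is left implicit in the paper, but it is the same proof.
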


\begin{proof}
The lower bound holds by Lemma~\ref{lem:trivialbound}. For the upper bound,  consider an edge colouring of $G$ with $\Delta(G) + 1$ colours (it exists by Vizing's Theorem) and let $M_1 , \dots , M_{\Delta(G) + 1}$ be the classes of colours. By assigning colour $1$ to $M_1 \cup \dots \cup M_{d}$, colour $2$ to $M_{d + 1} \cup \dots \cup M_{2d}$,  etc,  we obtain a $(\lceil \frac{\Delta+1}{d} \rceil,d)$ edge colouring of $G$.
\end{proof}

We now prove a generalization of Theorem~\ref{thm:chromatic_index_NP_complete} in the context of defective edge colouring. Before that, we need the following construction.  

For every integer $k,d \geq 1$, we construct a simple graph $G_{kd, d}$ such that $G$ is $kd$-regular and $\chi'_{d}(G) = k$. 
We can set $G_{d,d} = K_{d+1}$. Inductively, having defined $G_{kd,d}$, let $G_{(k+1)d,d}$ be the simple graph obtained by taking two disjoint copies of $G_{kd,d}$  and adding the edges of any $d$-regular bipartite simple graph between these two copies\footnote{For example, naming $u_1, \dots, u_n$ and $v_1, \dots, v_n$ the vertices of the two copies of $G_{kd,d}$, add the edges $u_iv_i, u_iv_{i+1}, \dots, u_iv_{i+d}$ for $i=1, \dots, n$, subscripts being taken modulo $n$. It gives a $d$-regular bipartite simple graph as soon as $n \geq d$.}. 
The obtained simple graph is clearly $(k+1)d$-regular, and we can $(k+1, d)$-edge colour it by taking a $(k,d)$-edge colouring for the two copies of $G_{kd,d}$ and add a new colour for the added edges, and finally  by Lemma~\ref{lem:trivialbound} it does not admit a $(k,d)$-edge colouring. Hence  $\chi'_d(G_{(k+1)d,d}) = k+1$.

\begin{theorem}\label{thm:NP}
Let $d, \Delta \geq 1$, and $G$ a simple graph with maximum degree $\Delta$.  
Then $\chi'_d(G) = \lceil \frac{\Delta}{d} \rceil$ if:
\begin{itemize}
    \item $d$ does not divide $\Delta$ or, 
    \item $d$ is even or, 
    \item $\Delta = d$ or,
    \item $\Delta = 2d$ and every $2d$-regular connected component of $G$ has an even number of vertices. 
\end{itemize}
Moreover, if $d$ is odd, $\Delta = 2d$ and a $2d$-regular connected component of $G$ has an odd number of vertices, then $\chi'_{d}(G) = 3$. 
Finally, in every other case, that is if $d$ is odd and $\Delta = kd$ for some $k \geq 3$, it is NP-complete to decide if $\chi'_d(G) = k $. 
\end{theorem}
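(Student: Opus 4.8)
The plan is to organise the whole statement around Corollary~\ref{coro:vizing}, which already pins $\chi'_d(G)$ down to the two values $\lceil\frac{\Delta}{d}\rceil$ and $\lceil\frac{\Delta+1}{d}\rceil$; the entire theorem then becomes a matter of deciding, case by case, which of the two occurs. When $d\nmid\Delta$ these two quantities coincide, so $\chi'_d(G)=\lceil\frac{\Delta}{d}\rceil$ and the first bullet is free. Hence from now on I may assume $d\mid\Delta$, say $\Delta=kd$, so that the two candidates are exactly $k$ and $k+1$; note also that when $G$ is $kd$-regular a $(k,d)$-edge colouring must use each colour exactly $d$ times at every vertex, i.e.\ its colour classes are precisely $k$ edge-disjoint $d$-factors. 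The case $k=1$ is trivial (one colour suffices), and the case $d$ even follows from Theorem~\ref{thm:deven}: I would first record the folklore fact that every simple graph of maximum degree $\Delta$ embeds as a subgraph of a $\Delta$-regular simple graph $H$ (two copies of $G$ joined by a simple bipartite graph realising the degree deficiencies, which exists by a standard bipartite degree-sequence argument since each deficiency is at most $\Delta<|V(G)|$), and then restrict an optimal colouring of $H$ back to $G$, using Lemma~\ref{lem:trivialbound} for the matching lower bound. This settles the first three bullets for all $k$.

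The case $\Delta=2d$ (that is, $k=2$) is where the parity of $d$ first bites, and it is worth doing by hand, component by component. A component that is not $2d$-regular is handled by adding an apex vertex joined to all its odd-degree vertices and two-colouring an Eulerian circuit of the resulting (connected, all-even-degree) graph alternately as in Lemmas~\ref{lem:delta_even_kV_even} and~\ref{lem:delta_even_kV_odd} (choosing the possibly exceptional edge incident to the apex); every vertex of degree $2d$ then receives exactly $d$ edges of each colour and every other vertex at most $d$, so such a component is $(2,d)$-colourable. A $2d$-regular component $C$ of even order has $d\cdot|V(C)|$ edges, an even number, so Lemma~\ref{lem:delta_even_kV_even} splits it into two $d$-factors; this proves the fourth bullet. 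Conversely, if $d$ is odd and some $2d$-regular component $C$ has an odd number of vertices, then a $(2,d)$-colouring would force both colour classes to be $d$-factors of $C$, i.e.\ $d$-regular graphs on an odd number of vertices, impossible since $d\cdot(\text{odd})$ is odd; hence $\chi'_d(G)\neq 2$, and as $\chi'_d(G)\le\lceil\frac{\Delta+1}{d}\rceil=3$ by Corollary~\ref{coro:vizing}, we get $\chi'_d(G)=3$, as claimed.

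It remains to prove that, for $d$ odd and $\Delta=kd$ with $k\ge 3$, deciding $\chi'_d(G)=k$ on $kd$-regular simple graphs is NP-complete. Membership in NP is clear, since a proposed colouring can be checked for defect $d$ in polynomial time. For hardness I would reduce from Theorem~\ref{thm:chromatic_index_NP_complete}: given a $k$-regular simple graph $H$, decide whether $\chi'_1(H)=k$ (NP-complete for every fixed $k\ge 3$ by Holyer~\cite{H81} and Leven--Galil~\cite{LZ83}). The guiding principle is the observation above that for a $kd$-regular target a $(k,d)$-colouring is exactly a decomposition into $k$ $d$-factors, and that if at a vertex of degree $kd$ the incident edges come in $k$ \emph{monochromatic} bundles of size $d$, then the defect bound forces these $k$ bundles to carry $k$ \emph{distinct} colours. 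Thus I would build from $H$ a $kd$-regular simple graph $G$ by keeping the vertices of $H$ and replacing each edge $uv$ by a \emph{forcing gadget} attaching $d$ edges to $u$ and $d$ edges to $v$, all of whose internal vertices have degree $kd$, designed so that in every $(k,d)$-colouring the $d$ edges meeting $u$ are monochromatic, the $d$ edges meeting $v$ are monochromatic, and the two colours coincide. Granting such a gadget, the monochromatic bundles at each original vertex are forced to use distinct colours, so $(k,d)$-colourings of $G$ correspond exactly to proper $k$-edge colourings of $H$, giving $\chi'_d(G)=k\iff\chi'_1(H)=k$.

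The main obstacle is precisely the construction and verification of this forcing gadget, the defective analogue of Holyer's inverting/parity gadgets, and the point where oddness of $d$ is indispensable. The mechanism I would exploit is the parity obstruction isolated in the $\Delta=2d$ case: since every colour class is a $d$-factor and $d$ is odd, a $d$-factor must meet any designated odd-order piece of the gadget with fixed parity, and by padding the gadget up to $kd$-regularity with auxiliary blocks that are themselves $(k,d)$-colourable (for instance built from the graphs $G_{kd,d}$ of the construction preceding Theorem~\ref{thm:NP}), one can try to force a $d$-factor to enter and leave each bundle ``all or nothing''. Making this simultaneously simple, exactly $kd$-regular, and genuinely forcing for all $k\ge 3$ and all odd $d$ is the delicate part; once it is in place, the correctness of the reduction and the equivalence $\chi'_d(G)=k\iff\chi'_1(H)=k$ follow routinely, and together with the positive cases above this establishes every clause of the theorem.
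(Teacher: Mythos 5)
Your handling of the four positive bullets and of the $\chi'_d=3$ case is sound and essentially the paper's argument (the paper realises the $\Delta=2d$ bullet by doubling $G$ and joining the two copies of each deficient vertex rather than by your apex-plus-Euler-tour variant, but both rest on Lemmas~\ref{lem:delta_even_kV_even} and~\ref{lem:delta_even_kV_odd} and on the parity obstruction that a $d$-factor cannot exist on an odd number of vertices when $d$ is odd). The problem is the NP-completeness part, which is the heart of the theorem: you reduce it to the existence of a ``forcing gadget'' replacing each edge $uv$ of the $k$-regular instance $H$ by a block whose $d$ attachment edges at $u$ and $d$ attachment edges at $v$ are monochromatic of the same colour in every $(k,d)$-colouring, and you explicitly concede that constructing and verifying such a gadget is ``the delicate part''. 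That concession is precisely the gap: without the gadget there is no reduction, and nothing in your sketch (parity of $d$-factors, padding with copies of $G_{kd,d}$) by itself yields the forcing property, nor the equally necessary converse property that the gadget admits a $(k,d)$-colouring compatible with any prescribed common colour at its two ends --- which you need for the forward implication $\chi'_1(H)=k\Rightarrow\chi'_d(G)=k$ and never address.

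For comparison, the paper's gadget attaches to vertices rather than to edges, which sidesteps the hardest requirement (forcing equality of colours at two far-apart ends). Given the $k$-regular graph $G$, for each vertex $v$ one takes $\frac{k(d-1)}{2}$ disjoint copies of $G_{kd,d}$, deletes one edge $ab$ from each copy, and adds the edges $av$ and $bv$; the resulting graph $G'$ is simple and $kd$-regular. Inside each pruned copy $C$ a counting argument does the forcing: $|E(C)|=\frac{kd|V(C)|}{2}-1$ while each colour class has at most $\frac{d|V(C)|}{2}$ edges inside $C$, so exactly one colour $c$ falls short by one edge, and then $a$ and $b$ each see only $d-1$ edges of colour $c$ and $d$ edges of every other colour inside $C$, forcing both $av$ and $bv$ to receive colour $c$. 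Since $d$ is odd, each colour can occur on at most $\frac{d-1}{2}$ of the $\frac{k(d-1)}{2}$ monochromatic pairs at $v$, hence on exactly $\frac{d-1}{2}$ of them, hence exactly once among the $k$ original edges at $v$ --- i.e.\ the induced colouring of $G$ is proper. The forward direction is immediate with the same gadget: distribute the colours $1+(i \bmod k)$ over the pairs and extend into each pruned copy using a $(k,d)$-colouring of $G_{kd,d}$, which exists with the two half-edges at $a$ and $b$ sharing one colour. If you want to rescue your edge-gadget plan you would have to supply an analogous counting or parity mechanism; as written, the proposal establishes the easy cases but not the hardness claim.
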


\begin{proof}
The first case is a direct consequence of Corollary~\ref{coro:vizing}, noticing that if $d$ does not divide $\Delta$, then $\lceil \frac{\Delta(G)}{d} \rceil = \lceil \frac{\Delta(G)+1}{d} \rceil$. 
The second case has already been proven. See Theorem~$\ref{thm:deven}$.
The third case is trivial. 
To prove the fourth case, take two disjoint copies of $G$, and for each vertex $v$ of $G$, add $2d - d(v)$ edges between the two copies of $v$. The obtained (not necessarily simple) graph $G'$ is $2d$-regular, and each of its connected component has even size  (as the connected components of odd size of $G$ were not regular and thus are included in connected components of even size of $G'$). Now, by Lemma~\ref{lem:delta_even_kV_even}, $G'$ is $(2,d)$-edge colourable, and so is $G$.

Assume now that $d$ is odd, $\Delta = 2d$, and $G$ has a $2d$-regular connected component $C$ of odd size. As $C$ is of odd size and $d$ is odd, $G$ does not admit a $d$-factor, and thus cannot be $(2,d)$ edge coloured. So, by Corollary~\ref{coro:vizing}, $\chi'_d(G) = 3$. 

Finally, assume that $d$ is odd and $\Delta = kd$ for some integer $k \geq 3$. 
Deciding if $\chi'_d(G) = k$ is clearly in NP. 
 We perform a reduction from the case $d=1$ (which is NP-complete by Theorem~\ref{thm:chromatic_index_NP_complete}).  Let $G$ be a $k$-regular simple graph.

We construct a simple graph $G'$ as follows: start with a copy of $G$, then for each vertex $v$ of $G$, add $\frac{k(d-1)}{2}$ disjoint copies of $G_{kd,k}$, remove one edge $ab$ and add edges $av$ and $bv$ for each copy. 
The graph $G'$ is clearly simple, and $kd$-regular. 

We now prove that $\chi'_{1}(G) = k$ if and only if $\chi'_{d}(G') = k$. 
Assume  first that there exists a $(k,1)$ edge colouring of $G$ using $k$ colours and let us show how to find a $(k,d)$-edge colouring of $G'$, which would imply that $\chi'_d(G) = k$ by Corollary~\ref{coro:vizing}. We use colours from $\{1, 2,\dots, k\}$.  
Start with a $(k,1)$ edge colouring of the copy of $G$ in $G'$. Observe that each vertex is incident with exactly one edge of each colour. 
For every vertex $u$ in the copy of $G$, and for $i=1,2, \dots, \frac{k(d-1)}{2}$, assign colour $1 + (i \mod k)$ to the two edges linking $u$ and the $i^{th}$ copy $G_{kd,k}$ (after we have removed an edge). Now, each vertex $u$ in the copy of $G$ is incident with precisely $d$ edges of each colour. It remains to extend the colouring to the copies of $G_{kd,d}$ (after we have removed an edge) which can easily be done since $\chi'_d(G_{kd,d}) = k$ and for each copy, naming $ab$ the missing edge, each of $a$ and $b$ is incident with a single coloured edge, both of the same colour.  

We now assume that $\chi'_d(G') = k$. We are going to show that in any $(k,d)$-edge colouring of $G'$, the copy of $G$ in $G'$ is $(k,1)$-edge coloured, implying that $\chi'_1(G) = k$ by Vizing Theorem. So let us start with a $(k,d)$ edge colouring of $G'$. 
Let $u$ be a vertex in the copy of $G$ in $G'$, and let $C$ be one of the $\frac{k(d-1)}{2}$ copies of $G_{kd,d}$ (from which we have removed an edge) linked with $u$. 
We name $a$ and $b$ the vertices in $C$ incident with the missing edge and observe that in $C$, each vertex has degree $kd$ except for $a$ and $b$ that have degree $kd-1$.  
Since $|E(C)| = \frac{kd|V(C)|}{2} - 1$ and each colour class covers at most $\frac{d|V(C)|}{2}$ edges of $C$,  
it must be that each colour class covers $\frac{d|V(C)|}{2}$ edges of $C$, that is each colour class induces a $d$-regular simple graph, except for one colour class that covers only $\frac{d|V(C)|}{2} -1$ edges. 
Naming $c$ this colour, we get that vertices $a$ and $b$ are incident with exactly $d-1$ edges coloured $c$, and $d$ edges of every other colour. So the edges $au$ and $bv$ must receive the colour $c$. 

In particular, for each copy of $G_{kd,d}$ linked with $u$, the two edges linking $u$ with this copy must receive the same colour.  
Moreover, each colour must appear between $u$ and precisely $\frac{d-1}{2}$ copies of $G_{kd,d}$, for otherwise $u$ would be incident with more than $d$ edges of a given colour.
Hence, each colour appears $d-1$ times in the $k(d-1)$ edges linking $u$ with the copies of $G_{kd,d}$ and thus each colour appears exactly once in the $k$ edges incident with $u$ in the copy of $G$. 
Hence, as announced, the $(k,d)$-edge colouring of $G'$ induces a $(k,1)$-edge colouring of $G$. 
\end{proof}

We point out that Vizing\cite{V64} also proved that for every (not necessarily simple) graph $G$ with maximum degree $\Delta$ and edge multiplicity $\mu$, $\chi'_{1}(G) \leq \Delta + \mu$ where the edge multiplicity is the maximum number of edges between two vertices. This directly implies that $\chi'_{d}(G) \leq \lceil \frac{\Delta + \mu}{d} \rceil$.


\section{Further works}\label{sec:further}

\subsection*{List colouring}
The \textit{$d$-defective list chromatic index} of a graph $G$, denoted by $ch'_d(G)$, is defined as the minimum $k$ such that, for any choice of list of $k$ integers given to each edge, there is an edge colouring with defect $d$ such that each edge receives a colour from its list.  So $ch'_1(G)$ is the usual list chromatic index. 

Borodin et al.~\cite{10.1006/jctb.1997.1780} proved that Shannon bound holds for the list chromatic index, that is, for every graph $G$, 
$ch_1'(G) \leq \lceil \frac{3 \Delta(G)}{2} \rceil$. It is then natural to ask if Theorem~\ref{thm:main_theorem} extends to defective list edge colouring. 
As mention in the introduction, when $d$ is even, it is proved in~\cite{HILTON2001253} (and a simpler proof is given in~\cite{amini:inria-00144318}) that for every graph $G$,  $ch'_d(G) = \lceil \frac{\Delta(G)}{d} \rceil$. 
When $d$ is odd, a proof that $chi'_{d}(G) \leq \lceil \frac{3\Delta}{3d - 1} \rceil$ is announced in~\cite{amini:inria-00144318} but seems to have a flaw and actually holds only in the case where $\Delta$ is divisible by $3k-1$. 

\begin{conjecture}
For every odd integer $d$ and for every  graph $G$, $ch'_d(G) \leq  \lceil \frac{3\Delta - 1}{3d-1} \rceil$
\end{conjecture}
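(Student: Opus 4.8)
The plan is to mirror the structure of the proof of Theorem~\ref{thm:main_theorem} as closely as the list setting allows, and to isolate the single place where the list version genuinely departs from it. First I would reduce to $\Delta$-regular graphs. This reduction stays valid for $ch'_d$: if $G$ is a subgraph of $H$ then $ch'_d(G)\le ch'_d(H)$, since any list assignment on $E(G)$ extends to $E(H)$ by giving the new edges arbitrary lists, and a valid defect-$d$ colouring of $H$ restricts to one of $G$. The embedding used in Theorem~\ref{thm:main_theorem} (doubling $G$ and adding $\Delta-d(v)$ parallel edges at each $v$) places $G$ inside a $\Delta$-regular graph, so it suffices to bound $ch'_d$ on regular graphs, and Lemma~\ref{lem:chi_increasing} again lets me restrict to special values of $\Delta$.

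Next I would try to re-run the factor-peeling induction. The factor-existence inputs (Theorem~\ref{thm:petersen}, Theorem~\ref{lem:factor2/3}, and the Eulerian splitting lemmas) are purely structural and survive verbatim, so for $\Delta\ge 5d-1$ one still extracts a $(3d-1)$-factor $F$ with $G-F$ regular of smaller special degree. The inductive hope is to colour $F$ using ``three colours' worth'' of each list and $G-F$ using the remaining $\lceil\frac{3(\Delta-(3d-1))-1}{3d-1}\rceil$. I expect this to be exactly the step that breaks: in the list model there is no global palette to split between $F$ and $G-F$, so one cannot simply reserve three colours for $F$. The right reformulation is therefore to prove, by induction on the edge-budget simultaneously, and to propagate list assignments through the factor decomposition rather than fixed colours.

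Concretely, the crux reduces to a list version of Case~3: \emph{every $(3d-1)$-regular multigraph is $d$-defectively $3$-edge-choosable.} Granting this, the inductive step should follow by the same arithmetic as in Theorem~\ref{thm:main_theorem}, provided one first proves a ``list factor-absorption'' lemma ensuring that an arbitrary size-$k$ list assignment on $G$ is handled by colouring $G-F$ from the induced lists and then extending across $F$; the difficulty here is that the colours already spent on $G-F$ shrink the lists on the boundary edges of $F$, so $F$ must be coloured from lists that are only guaranteed to have size $3$ after deleting the (at most $d-1$ per endpoint) colours used nearby. I would attack the base statement with a kernel/orientation argument in the spirit of Galvin and of Borodin et al.~\cite{10.1006/jctb.1997.1780}: decompose the factor into a $2d$-factor and a $(2d-1)$-factor via Theorem~\ref{lem:factor2/3}, orient each so that in- and out-degrees are balanced, and seek a kernel-perfect orientation of the associated line structure whose kernels encode the $\le d$ defect at every vertex.

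The main obstacle, and the reason this remains a conjecture, is precisely the passage from degree constraints to defect constraints inside the kernel method. For ordinary list edge colouring one colours the line graph $L(G)$ so that each colour class is independent; the defect-$d$ requirement instead permits up to $d$ edges of a colour at each endpoint, which is \emph{not} a maximum-degree condition on $L(G)$ (an edge $uv$ of colour $c$ may be adjacent to up to $(d-1)+(d-1)$ further edges of colour $c$). Thus the clean kernel argument yielding $ch'_1(G)\le\lceil 3\Delta/2\rceil$ does not transfer directly, and note that even the case $d=1$ of the conjecture demands $\lfloor 3\Delta/2\rfloor=\lceil\frac{3\Delta-1}{2}\rceil$, which improves the known list Shannon bound by one for odd $\Delta$. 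I therefore expect the essential new ingredient to be a defective kernel (or defective stable-transversal) lemma tailored to the Shannon triangle structure, after which the regular-graph induction above should close the proof.
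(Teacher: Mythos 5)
There is nothing in the paper to compare your proposal against: this statement is posed as an \emph{open conjecture}, and the paper explicitly records that the only announced proof of even the weaker bound $\lceil \frac{3\Delta}{3d-1}\rceil$ (in~\cite{amini:inria-00144318}) appears to be flawed. Your proposal, by its own admission, does not close this gap either: after the easy reductions (subgraph monotonicity of $ch'_d$ and restriction to regular graphs, which are correct but were never the difficulty), everything is deferred to two unproven statements --- a ``list factor-absorption'' lemma and the claim that every $(3d-1)$-regular multigraph is $d$-defectively $3$-edge-choosable --- and you then concede that the one known technique for Shannon-type list bounds (the kernel method of Galvin and of Borodin--Kostochka--Woodall~\cite{10.1006/jctb.1997.1780}) does not apply, since a defect-$d$ colour class is not a bounded-degree condition on the line graph. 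A reduction to statements at least as hard as the conjecture, plus an accurate diagnosis of why known methods fail, is a research plan, not a proof. (A side remark: your claim that the case $d=1$ would ``improve the known list Shannon bound by one for odd $\Delta$'' is off; Borodin, Kostochka and Woodall prove $ch'_1(G)\le\lfloor \frac{3\Delta}{2}\rfloor$, which is exactly the case $d=1$ of the conjecture, so the base case is known.)

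It is also worth making the failure of the factor-peeling induction more precise than ``no global palette to split,'' because it shows your absorption lemma is hopeless in the naive form you state. In the multigraph proof, defects at a vertex $v$ never add across $F$ and $G-F$ because the two factors use disjoint colour sets by fiat. With lists, a colour $c$ may appear in lists of edges of both factors at $v$; if $G-F$ is coloured first and uses $c$ on $d$ edges at $v$, then $c$ is saturated at $v$ and forbidden on every $F$-edge at $v$. Counting capacities: $v$ has $\Delta-(3d-1)$ edges in $G-F$, so up to roughly $\frac{\Delta}{d}-3$ colours can be saturated at $v$, and the same at the other endpoint $u$ of an $F$-edge, while the lists have size only $\lceil\frac{3\Delta-1}{3d-1}\rceil\approx\frac{\Delta}{d}+\frac{\Delta}{3d^2}$. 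For large $\Delta$ the two endpoints can jointly saturate far more colours than a list contains, so there is no guarantee of ``size $3$ after deleting colours used nearby'' --- indeed the residual list can be empty. Any correct argument must therefore colour $F$ and $G-F$ \emph{jointly}, tracking per-colour residual capacities $d-j$ at every vertex, and nothing in the proposal addresses how to do that; this is precisely why the statement remains a conjecture.
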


We finally mention the following stronger conjecture that corresponds to the infamous list edge colouring conjecture for $d=1$ and is proved for bipartite graph in~\cite{HILTON2001253}. 

\begin{conjecture}\cite{HSS98}
For every graph $G$ and every integer $d$, $ch'_d(G) = \chi'_d(G)$.
\end{conjecture}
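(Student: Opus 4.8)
The inequality $\chi'_d(G) \le ch'_d(G)$ is immediate, since assigning every edge the identical list $\{1,\dots,k\}$ reduces a $d$-defective list colouring to an ordinary $d$-defective colouring; so the whole content of the conjecture lies in the upper bound $ch'_d(G) \le \chi'_d(G)$. The first point to make explicit is that the instance $d=1$ is exactly the List Edge Colouring Conjecture, which is wide open. No choice of gadget can sidestep this, so I would not aim for an unconditional proof of the full statement; instead the plan is to (i) transport the techniques that settle the $d=1$ bound on restricted classes to arbitrary odd $d$, thereby settling \cite{HSS98} for those classes, and (ii) isolate the single orientation step at which the general case collapses onto the $d=1$ conjecture.

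The main tool would be the kernel method of Borodin, Kostochka and Woodall, which already yields the list form of Shannon's bound in \cite{10.1006/jctb.1997.1780}, combined with Galvin's stable-matching argument (the engine behind the bipartite case of \cite{HILTON2001253}). Starting from a $\chi'_d$-optimal colouring with classes $H_1,\dots,H_k$ of maximum degree at most $d$, I would build an auxiliary digraph $D$ on the edge set $E(G)$, joining two edges of $G$ when they share a vertex, and try to orient it so that every back-neighbourhood admits a kernel. The defective relaxation is exactly what changes the local constraint: at a vertex $v$ one is now allowed up to $d$ edges of a single colour, so the matching condition of the $d=1$ proof becomes a bounded-degree ($b$-matching) condition, and the classes $H_i$ supply precisely the slack needed to keep back-degrees below the list size. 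A greedy pass over lists of size $\chi'_d(G)$ along such an orientation then produces the required colouring.

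For the classes where the $d=1$ conjecture is known I would use a second, independent line: the Combinatorial Nullstellensatz / Alon--Tarsi method applied to a graph polynomial attached to the line graph $L(G)$, in which each vertex-clique of $L(G)$ contributes an elementary-symmetric factor encoding the ``at most $d$ equal colours at $v$'' condition rather than the ``all distinct'' condition of ordinary edge colouring. For bipartite $G$ the relevant top coefficient is forced non-zero by an Eulerian orientation count, which recovers the theorem of \cite{HILTON2001253}; I would first reprove that case this way to calibrate the polynomial, then extend it to series--parallel and bounded-treewidth graphs by evaluating the coefficient through a dynamic programme on a tree decomposition.

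The step I expect to be the genuine obstacle is obtaining the kernel-perfect orientation of $D$ for an arbitrary non-bipartite multigraph: specialised to $d=1$ this is precisely the unresolved List Edge Colouring Conjecture, so it cannot be overcome within this framework. Consequently the realistic deliverables are the unconditional cases (bipartite via Galvin, bounded treewidth via Alon--Tarsi) together with the structural observation that, modulo the orientation step, the defective conjecture of \cite{HSS98} sits exactly on top of the ordinary $d=1$ conjecture, so that a proof of the latter would propagate through the $b$-matching argument to every odd $d$.
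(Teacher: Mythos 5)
This statement is a \emph{conjecture} in the paper (attributed to \cite{HSS98}); the paper offers no proof of it, only the remarks that for $d=1$ it is the notorious List Edge Colouring Conjecture and that the bipartite case is settled in \cite{HILTON2001253}. So there is no proof here to compare yours against, and your central judgement is correct: the $d=1$ instance is wide open, no gadget or reduction can avoid it, and an unconditional blind proof was never on the table. Your easy direction ($\chi'_d(G)\le ch'_d(G)$ via constant lists) is fine, and your identification of where any kernel-style attack must stall --- producing a kernel-perfect orientation of the conflict digraph for a general non-bipartite multigraph --- is exactly the obstruction that keeps the $d=1$ case open. One small addition: for \emph{even} $d$ the conjecture is actually already known, since $ch'_d(G)=\lceil\Delta(G)/d\rceil=\chi'_d(G)$ is proved in \cite{HILTON2001253} (see also \cite{amini:inria-00144318}), so your restriction of the programme to odd $d$ is the right scoping even if you did not say so explicitly.

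As for the programme itself, one technical caveat deserves emphasis. A $d$-defective edge colouring is \emph{not} a proper vertex colouring of the line graph $L(G)$, so Galvin's kernel lemma and the Alon--Tarsi line-graph polynomial do not apply off the shelf: the constraint ``at most $d$ edges of one colour at each vertex'' replaces cliques of $L(G)$ by a packing-type ($b$-matching) condition, and there is at present no analogue of the Bondy--Boppana--Siegel kernel lemma for such ``$d$-fold kernels''. Formulating and proving that lemma is itself a substantial open step, not a routine adaptation; until it exists, even the bipartite defective case does not follow from your orientation argument (it is known, but by the splitting techniques of \cite{HILTON2001253}, not by a Galvin-type proof). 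So your proposal is best read as a reasonable research plan whose two pillars --- the $b$-matching kernel lemma and the defective Alon--Tarsi coefficient computation --- are both unproven, rather than as a proof of any new case of the conjecture.
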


\subsection*{The Goldberg-Seymour Conjecture}
Let $d \in \mathbb N^*$ and $G$ a graph. Observe that in any edge-coloring of $G$ with defect $d$, and for any $X \subseteq V(G)$, each color class contains at most $\lfloor \frac{d|X|}{2} \rfloor$ edges, which leads to the following lower bound on the $d$-defective edge chromatic number of any graph $G$: 

$$
    \chi'_d(G) \leq \Gamma_d(G) = 
    \max 
    \Big\{ \Bigl\lceil \frac{|E(G[X])|}{\lfloor \frac{d|X|}{2}\rfloor} \Bigr\rceil \mid X \subseteq V(G) \Big\}.
$$
The following was known as the Goldberg-Seymour Conjecture~\cite{G73, S79} for almost 50 years. Recently, Chen et al.~\cite{CJZ19} announced a proof. 

\begin{conjecture}[Golberg-Seymour~\cite{G73, S79}]\label{conj:goldberg}  
For every graph $G$,\\ $\chi'_1(G) \leq \max\{\Gamma_1(G), \Delta(G) + 1\}$. 
\end{conjecture}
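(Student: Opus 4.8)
The plan is to prove the equivalent \emph{density} formulation. Since $\Gamma_1(G) \le \chi'_1(G)$ always holds (this is the lower bound observed just above the statement), and since the inequality is trivial whenever $\chi'_1(G) \le \Delta(G)+1$, it suffices to show that every multigraph $G$ with $\chi'_1(G) \ge \Delta(G)+2$ satisfies $\chi'_1(G) \le \Gamma_1(G)$, and hence $\chi'_1(G)=\Gamma_1(G)$. I would argue by contradiction: suppose some $G$ violates the bound, fix $k=\chi'_1(G)-1 \ge \Delta(G)+1$, and aim to produce a set $X\subseteq V(G)$ with $|E(G[X])| > k\lfloor |X|/2\rfloor$, which forces $\Gamma_1(G)\ge k+1=\chi'_1(G)$, a contradiction.

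First I would replace $G$ by a \emph{$k$-critical} subgraph $H$: a subgraph with $\chi'_1(H)=k+1$ all of whose proper subgraphs are $k$-edge-colourable. Such an $H$ exists and satisfies $\Delta(H)\le\Delta(G)$ and $\Gamma_1(H)\le\Gamma_1(G)\le k$. Choosing any edge $e=xy$ of $H$, I fix a proper $k$-edge-colouring $\varphi$ of $H-e$ and, for each vertex $v$, write $\overline{\varphi}(v)$ for its set of \emph{missing} colours, so that $|\overline{\varphi}(v)| \ge k-\Delta(H) \ge 1$ at every vertex.

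The engine of the proof is the \emph{Tashkinov tree}: a sequence $T=(x=y_0,\,e_1=e,\,y_1,\,e_2,\,y_2,\dots)$ in which each edge $e_i$ joins $y_i$ to some earlier vertex of the sequence and carries a colour $\varphi(e_i)$ that is missing at a still-earlier vertex of $T$. I would take $T$ to be a maximum Tashkinov tree and set $X=V(T)$. Two structural facts must then be established. \textbf{Elementariness:} the sets $\overline{\varphi}(v)$ for $v\in X$ are pairwise disjoint; since they live among only $k$ colours, this gives $\sum_{v\in X}|\overline{\varphi}(v)| \le k$, equivalently $\sum_{v\in X} d_H(v) \ge k(|X|-1)+2$. \textbf{Closedness:} no colour missing somewhere in $X$ appears on an edge leaving $X$, so that almost all of the degree counted on $X$ stays inside $G[X]$. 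Substituting $\sum_{v\in X} d_H(v)=2|E(H[X])|+\partial(X)$, where $\partial(X)$ is the number of edges leaving $X$ and is tightly controlled by closedness, and invoking a parity argument that forces $|X|$ to be odd, a short count yields the strict density $|E(H[X])| > k\lfloor |X|/2\rfloor$, completing the contradiction.

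The main obstacle — and the reason the statement resisted proof for fifty years — is establishing elementariness and closedness for a maximum Tashkinov tree across the full range $k\ge\Delta+1$. Tashkinov's original theorem delivers elementariness only in a restricted regime; pushing it down to the threshold $\Delta+1$, and in particular ruling out that some colour could either enlarge $T$ or be swapped along an exit edge (either of which would contradict maximality), requires a delicate simultaneous analysis of pairs of colour classes through Kierstead-path and Vizing-fan recolourings, organised by an induction on the order in which vertices enter $T$. This structural core is exactly the content of the recent argument of Chen, Jing and Zang~\cite{CJZ19}; reproducing it in full lies well beyond a short sketch, whereas the edge-counting of the final paragraph is then routine.
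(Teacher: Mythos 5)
This statement is not proved in the paper at all: it is stated as a conjecture (the Goldberg--Seymour Conjecture), with only a remark that Chen, Jing and Zang~\cite{CJZ19} recently announced a proof. So there is no proof of the paper's to compare against, and the real question is whether your proposal stands on its own. It does not. Your outline correctly records the standard reduction (pass to a critical subgraph, fix an uncoloured edge, take a maximum Tashkinov tree $T$, set $X=V(T)$, and count), but the two facts you label \textbf{Elementariness} and \textbf{Closedness} are precisely the theorem whose proof ``lies well beyond a short sketch''; you defer them verbatim to~\cite{CJZ19}. Since every other step in your argument (criticality, the missing-colour count, the parity of $|X|$, the final density computation) has been routine and well known for decades, what you have written is a summary of the known proof strategy with the entire mathematical content delegated to a citation, not a proof. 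That is a genuine gap, and it is the same gap that kept the conjecture open for fifty years.

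There is also a technical slip in the counting step as you state it. Mere closedness (no colour missing in $X$ appears on an edge leaving $X$) does \emph{not} control $\partial(X)$ well enough to conclude. Writing $\partial_c(X)$ for the number of boundary edges of colour $c$, the per-colour count gives
\[
|E(H[X])| \;=\; 1 \;+\; \sum_{c\ \mathrm{missing\ in}\ X}\tfrac{|X|-1}{2} \;+\; \sum_{c\ \mathrm{not\ missing\ in}\ X}\tfrac{|X|-\partial_c(X)}{2},
\]
and the desired strict inequality $|E(H[X])| > k\lfloor |X|/2\rfloor$ is equivalent to $\sum_{c\ \mathrm{not\ missing}}(\partial_c(X)-1) < 2$. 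Since $|X|$ is odd, each such $\partial_c(X)$ is odd, so what you actually need is that every colour present at all vertices of $X$ leaves $X$ on \emph{exactly one} edge --- the property usually called \emph{strongly closed}, which is strictly stronger than closed. Establishing that a maximum Tashkinov tree is elementary \emph{and strongly closed} for every $k\ge\Delta+1$ is exactly the core of the Chen--Jing--Zang argument; with only ``closed'' in hand, your final paragraph's ``short count'' does not go through.
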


We think that the following generalization could hold. 

\begin{conjecture}\label{conj:Golderg_gen}
Every graph $G$ satisfies $\chi'_d(G) \leq \max\{\Gamma_d(G), \lceil \frac{\Delta(G) +1}{d} \rceil\}$.
\end{conjecture}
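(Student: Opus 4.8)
The plan is to reduce the conjecture to the (now proven, \cite{CJZ19}) Goldberg--Seymour Conjecture~\ref{conj:goldberg} through the amalgamation/detachment correspondence that already underlies the even case. Call a multigraph $H$ a \emph{$d$-detachment} of $G$ if $V(H)$ is obtained by splitting each $v\in V(G)$ into $d$ copies and distributing the edges incident with $v$ among these copies, keeping a bijection on edge sets. A proper edge colouring of $H$ projects to an edge colouring of $G$ in which each vertex sees each colour at most $d$ times; conversely any $(k,d)$-edge colouring of $G$ detaches into a proper $k$-edge colouring of some $d$-detachment, by assigning, at each vertex and each colour, its at most $d$ monochromatic edges to distinct copies. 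Hence $\chi'_d(G)=\min_H \chi'_1(H)$ over all $d$-detachments $H$ of $G$. I would first reduce to the regular case exactly as in the proof of Theorem~\ref{thm:main_theorem}, using Lemma~\ref{lem:chi_increasing} and the two-copies construction, and observe that the content is the case $\Delta=kd$ in a $kd$-regular multigraph (recall that for simple $G$ with $d\nmid\Delta$ the bound is already immediate from Corollary~\ref{coro:vizing}).

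It then suffices to exhibit one $d$-detachment $H$ for which Goldberg--Seymour yields the desired bound, i.e. such that
$$\max\bigl\{\Delta(H)+1,\ \Gamma_1(H)\bigr\}\ \le\ \max\Bigl\{\Gamma_d(G),\ \Bigl\lceil \tfrac{\Delta+1}{d}\Bigr\rceil\Bigr\}.$$
The degree term is handled by a \emph{balanced} detachment: splitting the edges at each vertex as evenly as possible, obtained from Eulerian-cycle decompositions exactly as in Lemmas~\ref{lem:delta_even_kV_even} and~\ref{lem:delta_even_kV_odd}, and from Petersen's Theorem~\ref{thm:petersen} when $\Delta$ is even, gives $\Delta(H)=\lceil\Delta/d\rceil$, so that $\Delta(H)+1=\lceil(\Delta+1)/d\rceil$ precisely when $d\mid\Delta$. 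The crux is to arrange the balanced detachment so that its odd-set density is controlled, namely $\Gamma_1(H)\le\Gamma_d(G)$. As a sanity check this is tight on the extremal examples: for $G=Sh(k)$ the set $X$ of all three vertices already gives $\Gamma_d(Sh(k))=\lceil(3k-1)/(3d-1)\rceil=\chi'_d(Sh(k))$ by Lemma~\ref{lem:shannon_tight}, so $\Gamma_d$ is the correct density parameter and a density-preserving detachment would simultaneously recover tightness.

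The main obstacle is exactly this density-preservation step for odd $d$. For even $d$ the balanced detachment is furnished by Petersen $2$-factorisations with no parity loss, which is why Theorem~\ref{thm:deven} and its list version~\cite{HILTON2001253, amini:inria-00144318} go through cleanly; but for odd $d$ the interaction between the parity of an odd subset $X$ and the capacity $\lfloor d|X|/2\rfloor$ produces precisely the $3d-1$ denominator of Theorem~\ref{thm:main_theorem}, and I know of no general theorem guaranteeing a detachment that realises the balanced degrees while refusing to concentrate edges on a small odd subset. Establishing such a density-preserving detachment theorem — essentially an odd-set refinement of the Hilton--de Werra amalgamation machinery — is, in my view, the heart of the conjecture.

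A second, self-contained route is to adapt the critical-graph proof of Goldberg--Seymour directly. One would take a minimum counterexample $G$ together with a $(k,d)$-edge colouring of $G-e$ in which some colour is missing fewer than $d$ times at the endpoints of $e$, then grow a Tashkinov-type tree and recolour so as to force a subset $X$ with $|E(G[X])|>k\lfloor d|X|/2\rfloor$, contradicting $k\ge\Gamma_d(G)$. The obstacle here is structural: the Tashkinov/Kempe apparatus rests on the union of two colour classes of a \emph{proper} colouring being a disjoint union of paths and cycles, whereas with defect $d$ the union of two classes is only a subgraph of maximum degree $2d$, so two-colour swaps are no longer canonical and both the tree-growing and the counting arguments would have to be rebuilt. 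Either route, I expect, requires genuinely new combinatorial input beyond the tools developed in this paper.
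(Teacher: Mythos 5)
The statement you are addressing is Conjecture~\ref{conj:Golderg_gen}, which the paper leaves \emph{open}: there is no proof to compare against, only the authors' remark that the naive route (pass through $\chi'_1(G)=\Gamma_1(G)$ and bundle colour classes into groups of $d$) fails, witnessed by the multigraph on three vertices joined by $7$, $7$ and $2$ parallel edges. Your proposal is not a proof either, and you say so yourself: route 1 terminates at an unproven ``density-preserving detachment'' lemma ($\Gamma_1(H)\le\Gamma_d(G)$), and route 2 at an unbuilt defect-$d$ Tashkinov theory. What can be assessed is your claim that, via the (correct) amalgamation identity $\chi'_d(G)=\min_H\chi'_1(H)$ and the reductions borrowed from Theorem~\ref{thm:main_theorem}, the conjecture reduces to $kd$-regular multigraphs, where Goldberg--Seymour plus the missing lemma would finish. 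That reduction is where the proposal breaks, beyond the gaps you acknowledge.

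Concretely: (i) you dispose of the case $d\nmid\Delta$ by Corollary~\ref{coro:vizing}, but that corollary is for \emph{simple} graphs, while the conjecture concerns multigraphs; (ii) for multigraphs with $d\nmid\Delta$, route 1 is not merely incomplete but impossible. Take the paper's own example: $d=3$ and $G$ the three-vertex multigraph with multiplicities $7,7,2$, so $\Delta=14$, $\Gamma_3(G)=4$, and the conjecture asserts $\chi'_3(G)\le\max\{4,\lceil 15/3\rceil\}=5$, which is sharp since $\chi'_3(G)=5$. Every $3$-detachment $H$ splits the degree-$14$ vertex into three copies, one of which has degree at least $\lceil 14/3\rceil=5$, hence $\max\{\Delta(H)+1,\Gamma_1(H)\}\ge 6$: no detachment satisfies your ``it suffices to exhibit\dots'' inequality, and Goldberg--Seymour applied to any detachment certifies only $\chi'_3(G)\le 6$. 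The term $\lceil\frac{\Delta+1}{d}\rceil$ is exactly what detachment plus Goldberg--Seymour cannot reach when $d\nmid\Delta$, i.e.\ exactly in the regime where that term, rather than $\Gamma_d$, is the binding one. (iii) The regularization step also does not transfer: unlike in Theorem~\ref{thm:main_theorem}, the bound to be proven depends on the graph through $\Gamma_d$, and embedding $G$ into a regular supergraph $G^*$ (Lemma~\ref{lem:chi_increasing} or the two-copies trick) can increase $\Gamma_d$, so the conjecture for $G^*$ does not imply it for $G$ without a further argument you do not supply. In short, your detachment framework is sound and genuinely different from the paper's recorded failed attempt, but the conjecture does not reduce to the $kd$-regular case by the means you describe, and in the non-divisible multigraph regime the Goldberg--Seymour black box is demonstrably too weak; the admitted missing lemma is therefore not the only, and not even the first, obstruction.
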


An easy proof of the conjecture could start as follows.  A counter-example $G$ to the conjecture must satisfy $\Delta(G) +1 < \chi'_1(G) =\Gamma_1(G)$. 
This implies that $\chi'_d(G) \leq \lceil \frac{\Gamma_1(G)}{d} \rceil$. 
So it is enough to prove that $\lceil \frac{\Gamma_1(G)}{d} \rceil \leq \max\{\Gamma_d(G), \lceil \frac{\Delta(G) +1}{d} \rceil\}$. Unfortunately this last inequality does not hold, for example on the following simple example.  
Consider the graph $G$ made of three vertices connected by respectively $7$, $7$ and $2$ edges. The followings hold: 
$$15 = \Delta(G) +1 < \Gamma_1(G)= \chi'_1(G) = 16,$$ whereas 
$$4 =  \Gamma_3(G) < \lceil \frac{\Delta(G) + 1}{3}\rceil =  \chi'_3(G) = 5.$$

\subsection*{The degree Ramsey number of stars}

In this subsection, we briefly describe the link between the degree Ramsey number of stars and defective edge colouring. We are thankfull to Ross Kang for bringing this to our attention.    

Let $H$, $G$ be simple graphs. Let $H \rightarrow_s G$ means that every colouring of $E(H)$ with $s$ colours produces a monochromatic copy of $H$. 
The \textit{degree Ramsey number} of a simple graph $G$ is $R_{\Delta}(G;s) = \min\{\Delta(H): H \rightarrow_s G\}$. 
Observe that $H \rightarrow_s K_{1,d+1}$ means that $\chi'_d(H) \geq s+1$. Hence,  $R_{\Delta}(K_{1,d+1};s) = \min\{\Delta(H): \chi'_d(H) \geq s+1\}$. 

It can be proved (with a little brain gymnastic) that the following result of Kinnersley, Milans and West is equivalent to corollary~\ref{coro:vizing}. 

\begin{theorem}\cite{kinnersley_milans_west_2012}
If $s \geq 2$, then 
$
R_{\Delta}(K_{1,d+1}; s) = \left\{
    \begin{array}{ll}
       s\cdot d & \mbox{if } d \mbox{ is odd} \\
        s\cdot d +1 &  \mbox{if } d \mbox{ is even}
    \end{array}
\right.
$
\end{theorem}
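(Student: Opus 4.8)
The plan is to exploit the dictionary, recorded just before the statement, between degree Ramsey numbers of stars and defective edge colouring. First I would spell out the equivalence precisely: a monochromatic $K_{1,d+1}$ in an edge colouring of $H$ is exactly a vertex incident with $d+1$ edges of the same colour, so an $s$-colouring of $E(H)$ avoids a monochromatic $K_{1,d+1}$ if and only if it is a $d$-defective edge colouring with $s$ colours. Hence $H \rightarrow_s K_{1,d+1}$ holds iff $H$ has no $(s,d)$-edge colouring, i.e. iff $\chi'_d(H) \geq s+1$, and therefore $R_{\Delta}(K_{1,d+1};s) = \min\{\Delta(H) : \chi'_d(H) \geq s+1\}$, the minimum taken over simple graphs $H$. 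The whole problem thus becomes: what is the least maximum degree forcing $\chi'_d \geq s+1$?

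For the lower bounds I would read off the upper bounds on $\chi'_d$ from the earlier results. If $d$ is odd, Corollary~\ref{coro:vizing} gives $\chi'_d(H) \leq \lceil \frac{\Delta(H)+1}{d}\rceil$, so $\chi'_d(H) \geq s+1$ forces $\lceil \frac{\Delta(H)+1}{d}\rceil \geq s+1$, i.e. $\Delta(H) \geq sd$; this yields $R_{\Delta} \geq sd$. If $d$ is even, Theorem~\ref{thm:main_theorem} gives the exact value $\chi'_d(H) = \lceil \frac{\Delta(H)}{d}\rceil$, so $\chi'_d(H) \geq s+1$ is equivalent to $\Delta(H) \geq sd+1$; this simultaneously gives the lower bound $R_\Delta \geq sd+1$ and, taking $H = K_{sd+2}$ (which is $(sd+1)$-regular, hence $\chi'_d = s+1$), the matching upper bound. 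This settles the even case completely.

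The remaining and genuinely harder task is the odd-$d$ upper bound: I must exhibit a simple graph $H$ with $\Delta(H) = sd$ — the smallest degree not excluded above — that still needs $s+1$ colours. Corollary~\ref{coro:vizing} already guarantees $\chi'_d(H) \leq s+1$ for such $H$, so it suffices to rule out an $(s,d)$-edge colouring. Here I would invoke the counting bound underlying $\Gamma_d$: in any defect-$d$ colouring each colour class has at most $\lfloor \frac{dn}{2}\rfloor$ edges on $n$ vertices, so if $|E(H)| > s\lfloor\frac{dn}{2}\rfloor$ then $\chi'_d(H) \geq s+1$. I would take $H$ to be a simple graph on an \emph{odd} number $n$ of vertices (with $n \geq sd+1$) having maximum degree $sd$ and the largest possible number of edges. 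Since $d$ and $n$ are odd, $\lfloor \frac{dn}{2}\rfloor = \frac{dn-1}{2}$, and a short comparison splits according to the parity of $sd$: when $sd$ is even an $sd$-regular $H$ has $\frac{sdn}{2} > \frac{s(dn-1)}{2}$ edges, while when $sd$ is odd (i.e. $s$ and $d$ both odd) the densest such $H$ has $\frac{sdn-1}{2}$ edges, and $\frac{sdn-1}{2} > \frac{s(dn-1)}{2}$ precisely because $s \geq 2$. In both cases $\chi'_d(H) \geq s+1$, giving $R_\Delta \leq sd$ and hence equality.

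The main obstacle is exactly this last construction, and in particular the odd/odd subcase: one must realize the near-regular degree sequence $(sd,\dots,sd,sd-1)$ on an odd number of vertices as a simple graph (easily done, e.g. via its complement, which is a path plus a matching) and notice that the strict inequality only survives thanks to the hypothesis $s \geq 2$ — reflecting that for $s=1$ the value is $d+1$ regardless of the parity of $d$. The parity bookkeeping (choosing $n$ odd so that each colour class wastes one edge, and tracking when $sd$ is odd) is where all the care is needed; the rest is routine translation through Corollary~\ref{coro:vizing} and Theorem~\ref{thm:main_theorem}.
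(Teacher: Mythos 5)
Your proposal is correct, but there is no "paper's proof" to compare it against: the paper quotes this theorem from Kinnersley--Milans--West and merely remarks that it "can be proved (with a little brain gymnastic)" to be equivalent to Corollary~\ref{coro:vizing}. What you have written is, in effect, that gymnastic carried out in full, so your route is the only complete one on the table. The translation of the arrow relation into $\chi'_d$ and the lower bounds (Corollary~\ref{coro:vizing} for odd $d$; Theorem~\ref{thm:deven} for even $d$, which also yields the even upper bound via $K_{sd+2}$) are the routine part. The substantive step is the odd-$d$ upper bound, where you must produce a \emph{simple} graph with maximum degree $sd$ and $\chi'_d \geq s+1$: note that the paper's own tightness example for odd $d$, the Shannon graph $Sh(sd)$ of Lemma~\ref{lem:shannon_tight}, is a multigraph and hence unusable here, since the degree Ramsey number ranges over simple graphs. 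Your replacement --- a densest simple graph of maximum degree $sd$ on an odd number $n$ of vertices, ruled out by the counting bound $|E(H)| > s\lfloor dn/2 \rfloor$ --- is sound, and is the same counting that underlies Lemma~\ref{lem:shannon_tight} and the quantity $\Gamma_d$ of Section~5, adapted to simple graphs. The parity bookkeeping checks out: when $s$ is even, an $sd$-regular simple graph on odd $n \geq sd+1$ exists (e.g.\ $K_{sd+1}$ or a circulant) and exceeds $s(dn-1)/2$ by $s/2$; when $s$ is odd, the degree sequence $(sd,\ldots,sd,sd-1)$ on $n = sd+2$ vertices is realizable (your complement argument is fine, as $sd-1$ is even), and the margin $(s-1)/2$ is positive exactly because $s \geq 2$, which correctly explains why the formula fails at $s=1$ for odd $d$. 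So your proof is complete and self-contained modulo the paper's results, and it supplies an argument the paper itself omits.
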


It could be of interest to look at the degree Ramsey number of (multi)graphs. 
\bigskip

{\bf Acknowledgment}: 
This work was supported by the group Casino/ENS Chair on Algorithmics and Machine Learning.

\end{document}